\date{\today}
\title[Direct images of semi-meromorphic currents]{Direct images of
  semi-meromorphic currents}
\author{Mats Andersson \& Elizabeth Wulcan}
\thanks{The authors were partially supported by the Swedish Research Council.} 
\subjclass[2000]{32A26, 32A27, 32B15, 32C30}
\address{Department of Mathematical Sciences, Division of Mathematics, University of Gothenburg and 
Chalmers University of Technology, SE-412 96 G\"{o}teborg, Sweden}
\email{matsa@chalmers.se,  wulcan@chalmers.se}
\newtheorem{thm}{Theorem}[section]
\newtheorem{lma}[thm]{Lemma}
\newtheorem{cor}[thm]{Corollary}
\newtheorem{prop}[thm]{Proposition}
\theoremstyle{definition}
\newtheorem{df}[thm]{Definition}
\theoremstyle{remark}
\newtheorem{preremark}[thm]{Remark}
\newtheorem{preex}[thm]{Example}
\newcommand{\divi}{\text{div}}
\newenvironment{remark}{\begin{preremark}}{\qed\end{preremark}}
\newenvironment{ex}{\begin{preex}}{\qed\end{preex}}
\newcommand{\C}{\mathbb{C}}
\newcommand{\dbar}{\bar{\partial}}
\newcommand{\E}{\mathcal{E}}
\newcommand{\W}{\mathcal{W}}
\newcommand{\PM}{\mathcal{PM}}
\newcommand{\Ok}{\mathcal{O}}
\newcommand{\D}{\mathcal{D}}
\newcommand{\V}{\mathcal {V}}
\newcommand{\pmm}{pseudomeromorphic }
\newcommand{\nbh}{neighborhood }
\newcommand{\1}{{\bf 1}}
\newcommand{\w}{{\wedge}}
\newcommand{\codim}{{\text{codim}\,}}
\newcommand{\Hom}{{\text{Hom}\,}}
\newcommand{\U}{{\mathcal U}}
\newcommand{\Cu}{{\mathcal C}}
\def\newop#1{\expandafter\def\csname #1\endcsname{\mathop{\rm #1}\nolimits}}
\def\pfrac#1{\Big[\frac{1}{#1}\Big]}
\def\re{\text{Re\,}}
\DeclareMathOperator{\supp}{supp}
\DeclareMathOperator{\sing}{sing}
\numberwithin{equation}{section}
\begin{document}
\nocite{*}
\bibliographystyle{plain}

\begin{abstract}
We introduce a calculus for the class $ASM(X)$ of direct images of semi-meromorphic
currents on a reduded analytic space $X$, that extends the classical calculus due to Coleff, Herrera and Passare. 
Our main result is that each element in this class acts as a kind of multiplication on
the sheaf $\PM_X$ of pseudomeromorphic currents on $X$. 
We also prove that $ASM(X)$ as well as $\PM_X$ and certain subsheaves are closed under the 
action of holomorphic differential operators and interior multiplication by
holomorphic vector fields.   


\end{abstract}

\maketitle


\section{Introduction}
Let $f$ be a generically nonvanishing  holomorphic function on
 a reduced analytic space $X$ of pure dimension $n$.
It was proved by Herrera and Lieberman, \cite{HeLi}, that one can define the principal
value current 
\begin{equation}\label{apa}
\Big[\frac{1}{f}\Big].\xi:=\lim_{\epsilon\to 0}\int_{|f|^2>\epsilon}\frac{\xi}{f},
\end{equation}
for test forms $\xi$. It follows that $\dbar[1/f]$ is a current with support on the zero set
$Z(f)$ of $f$; such a current is called a residue current. Coleff and
Herrera, \cite{CoHe}, introduced products of principal value and
residue currents, like
\begin{equation}\label{apan2}
[1/f_1]\cdots [1/f_r] \dbar[1/f_{r+1}]\w\cdots 
\w\dbar[1/f_m].
\end{equation}
The product of principal value currents is commutative, but when there
are residue factors, like $\dbar [1/f_j]$, present these 
products are not (anti-)commutative in general. 
In the literature there are various generalizations and related currents, for instance
the abstract so-called Coleff-Herrera currents introduced by Bj\"ork,
see \cite{Bj}, the Bochner-Martinelli type residue currents introduced in \cite{PTY}, 
 and generalizations  in, e.g., \cite{A1},
\cite{Astrong}, and \cite{AW1}.

\smallskip
In order to obtain a coherent approach to questions about residue and
principal value currents the sheaf $\PM_X$ of {\it pseudomeromorphic
  currents} on $X$ was introduced in\cite{AW2} and further developed
in \cite{AS}; this sheaf  
consists of  direct images under holomorphic mappings of products of test forms
and currents like \eqref{apan2}. See Section~\ref{pmsection} below for the
precise definition. 
This sheaf is closed under $\dbar$ and under multiplication by smooth forms.
Pseudomeromorphic currents have a geometric
nature, similar to positive closed (or normal) currents. For example, the {\it dimension principle} states that if the  \pmm current
$\mu$ has bidegree $(*,p)$ and support on a variety of codimension larger than 
$p$, then $\mu$ must vanish. Moreover one can form restrictions $\1_W\mu$ of
the \pmm current $\mu$ to analytic (or constructible) subsets
$W\subset X$, such that 
\begin{equation}\label{skolgard}
\1_V\1_W\mu=\1_{V\cap W}\mu,
\end{equation}
see Section~\ref{restsec}. 
The notion of \pmm currents plays a
decisive role in,  for instance,
\cite{AW2, AWsemester, ASS, Lark1, Lark2, AS, Sz, Sz2, RSW, Lund, SK}.

\smallskip
It is well-known that one cannot multiply currents in general. Several attempts to find a working
calculus for principal value and residue currents have been made. A famous by Coleff and Herrera,
\cite{CoHe}, see also Passare, \cite{P}, asserts that \eqref{apan2} has all
expected (anti-)commutativity properties as long as the common zero
set of $f_1,\ldots, f_m$ has codimension $m$.
Various extension are 
introduced in the references above.  In \cite{AW2} we proved that one can give a reasonable meaning
to a product $[1/f] \mu$ for any holomorphic function $f$ and \pmm current $\mu$; 
more precisely one should consider this as an operator 
\begin{equation}\label{apan3}
\mu\mapsto [1/f] \mu
\end{equation}
on the sheaf $\PM_X$. 

\smallskip
We have not found a way 
to define a reasonable product of general \pmm currents.
Our first objective in this paper is to study a generalization of principal
value currents leading to an extension of 
\eqref{apan3}.  Following \cite{AS} we say that a current $a$ is \emph{almost
  semi-meromorphic}, $a\in ASM(X)$, if it is the direct image under a
modification of a semi-meromorphic current, i.e., a current of the form 
$\omega [1/f]$, where $f$ is a holomorphic section of a line bundle
and $\omega$ is a smooth
form with values in the same bundle.
Almost semi-meromorphic currents are \pmm and in many ways they generalize principal
value currents.  For example, it turns out that they form an (anti-)commutative
algebra, see Section ~\ref{asmsec}. Moreover $ASM(X)$ is closed under
$\partial$, see Proposition ~\ref{skrot}. Taking $\dbar$ of $a\in
ASM(X)$, however, yields an almost
semi-meromorphic current plus a residue current supported on the
\emph{Zariski singular support},  $ZSS(a)$, of $a$, which is the smallest analytic set where $a$ is not smooth. 
Many of the currents in the references above can be considered as (products of) the
residues of almost semi-meromorphic currents. 
Theorem ~\ref{hittills} states that the mapping \eqref{apan3} holds for any almost semi-meromorphic current $a$ instead of $[1/f]$. More precisely, there is a
unique extension to $X$ of the current $a\w\mu$, defined in the obvious way in
$X\setminus ZSS(a)$, such that its restriction to $ZSS(a)$ is zero.

A second objective is to prove that $\PM_X$ and $ASM(X)$ 
are closed under interior multiplication by a holomorphic
vector field $\xi$ and under the Lie derivative with respect to $\xi$;
see Sections~\ref{kolik} and ~\ref{actsec}. 

In Section ~\ref{pmsection} we recall basic known properties of the sheaf $\PM_X$ and provide 
some new results, e.g., Theorem ~\ref{grus} gives 
a new quite natural characterization of
pseudomeromorphicity. 
Section \ref{asmsec} is devoted to the study of $ASM(X)$.

\smallskip
\noindent {\bf Ackowledgment.}  We are grateful to the referee for careful reading  and pointing out unclarities  and misprints.

\section{Pseudomomeromorphic currents}\label{pmsection}
In one complex variable $s$ one can define the principal value current $[1/s^m]$ for instance as
the value 
$$
\Big[\frac{1}{s^m}\Big]=\frac{|s|^{2\lambda}}{s^m}\Big|_{\lambda=0}
$$
of the current-valued analytic continuation of $\lambda\mapsto |s|^{2\lambda}/s^m$,  a~priori defined
for $\re\lambda\gg 0$, see, e.g., \cite[Lemma~2.1]{A1}. 
We have the relations
\begin{equation}\label{utter}
\frac{\partial}{\partial s}\Big[\frac{1}{s^m}\Big]=-m\Big[\frac{1}{s^{m+1}}\Big], \quad s\Big[\frac{1}{s^{m+1}}\Big]=
\Big[\frac{1}{s^m}\Big].
\end{equation}
It is also well-known that
\begin{equation}\label{snok1}
\dbar\big[\frac{1}{s^{m}}\Big].\xi ds=\frac{2\pi i}{(m-1)!}\frac{\partial^{m-1}}{\partial s^{m-1}}\xi(0)
\end{equation}
for test functions $\xi$ and $m\geq 1$; in particular, $\dbar[1/s^{m}]$ has
support at $\{s=0\}$. Thus 
\begin{equation}\label{snok2}
\bar s \dbar\big[\frac{1}{s^{m}}\Big]=0, \quad d\bar s\w \dbar\big[\frac{1}{s^{m}}\Big]=0.
\end{equation}

We say that a function $\chi$ on the real line is a
{\it smooth approximand of the characteristic function
$\chi_{[1,\infty)}$} of the interval $[1,\infty)$, and write 
$$
\chi\sim\chi_{[1,\infty)},
$$ 
if $\chi$ is smooth,
equal to $0$ in a \nbh of $0$ and $1$ in a \nbh of $\infty$. 
It is well-known that $[1/s^m]=\lim_{\epsilon\to 0}\chi(|s|^2/\epsilon)(1/s^m)$.

\smallskip

Let $t_j$ be coordinates in an open set $\U\subset \C^N$ and let
$\alpha$ be a smooth form with compact support in $\U$.
Then 
\begin{equation}\label{1elem}
\tau=\alpha\w\Big[\frac{1}{t_1^{m_1}}\Big]\cdots\Big[\frac{1}{t_k^{m_k}}\Big]\dbar\Big[\frac{1}{t_{k+1}^{m_{k+1}}}\Big]\w\ldots
\w \dbar\Big[\frac{1}{t_r^{m_r}}\Big],
\end{equation}
where $m_1,\ldots, m_r\geq 1$, 
is a well-defined current,
since it is the tensor product of
one-variable currents (times $\alpha$). We say that $\tau$ is an {\it elementary (pseudomeromorphic) current}, and
we refer to $[1/t_j^{m_j}]$ and $\dbar[1/t_\ell^{m_\ell}]$ as its
{\it principal value factors} and  {\it residue factors}, respectively.
It is clear that \eqref{1elem} is commuting in the principal value factors and
anti-commuting in the residue factors.
We say the the intersection of $\U$ and the coordinate plane
$\{t_{k+1}=\cdots=t_r=0\}$ is the 
{\it elementary support} of $\tau$. Clearly the support of $\tau$ is  contained in
the intersection of the elementary support of $\tau$ and the support of
$\alpha$. 

\begin{remark}\label{skymma} 
Since $\partial$ does not introduce new residue factors, 
$\partial\tau$ is an elementary current, cf.\ \eqref{utter}, 
whose elementary support either equals the elementary
support $H$ of $\tau$ or is empty. Moreover $\dbar\tau$ is a finite sum of
elementary currents, whose elementary supports are either
equal to $H$ or coordinate planes of codimension $1$ in $H$, cf., ~\eqref{snok1}. 
\end{remark}

\subsection{Definition and basic properties}
Let $X$ be a reduced complex space of pure dimension $n$. 
Fix a point $x\in X$. We say that a germ $\mu$  of a current at $x$ is {\it pseudomeromorphic} at $x$,
$\mu\in \PM_x$,  if it is a finite sum
of currents of the form
\begin{equation}\label{straff}
\pi_*\tau=\pi_*^1 \cdots \pi_*^m \tau,
\end{equation}
 where $\U\subset X$ is a \nbh of $x$, 
\begin{equation}\label{1struts}
\U_m \stackrel{\pi^m}{\longrightarrow} \cdots \stackrel{\pi^2}{\longrightarrow} \U_1 
\stackrel{\pi^1}{\longrightarrow} \U_0=\U,
\end{equation}
each $\pi^j\colon \U_j \to \U_{j-1}$ is either a modification,  a simple projection
$\U_{j-1}\times Z \to \U_{j-1}$, or an open inclusion (i.e., $\U_{j}$ is an open subset
of $\U_{j-1}$), and $\tau$ is elementary on $\U_m\subset \C^N$.

By definition the union $\PM=\PM_X=\cup_x\PM_x$ is an open subset (of
the \'etal\'e space) of the sheaf $\Cu=\Cu_X$
of currents, and hence it is a subsheaf, which we call the sheaf of {\it pseudomeromorphic}  
currents\footnote{The definition here is from \cite{AS}; in the original definition in 
\cite{AW2}  simple projections were not included.}. 
A section $\mu$ of $\PM$ over an open set $\V\subset X$,  $\mu\in\PM(\V)$, is then a locally finite
sum 
\begin{equation}\label{batting}
\mu=\sum (\pi_\ell)_*\tau_\ell,
\end{equation}
where each $\pi_\ell$ is a composition of mappings as in \eqref{1struts}
(with $\U\subset\V$) and $\tau_\ell$ is elementary. For simplicity
we will always suppress the subscript $\ell$ in $\pi_\ell$. 
If $\xi$ is a smooth form, then 
\begin{equation}\label{kondor}
\xi\w \pi_*\tau=
\pi_*\big(\pi^*\xi\w\tau\big).
\end{equation}
Thus $\PM$ is closed under exterior multiplication by smooth forms.
Since 
$\dbar$ and $\partial$ commute with 
push-forwards it follows that  $\PM$ is closed under $\dbar$ and
$\partial$, cf.\ Remark ~\ref{skymma}. 

\begin{remark}\label{gryning}
Let $\tau$ be an elementary current with elementary support $H$. 
Since $H$ is the intersection of an open set $\U$ and a linear subspace, each of its components
is irreducible, and it follows that, in fact, $\tau$ is a finite sum of currents
$\tau_\ell$ such that the support of $\tau_\ell$ is contained in an
irreducible component of $H$. We may
therefore assume that each $\tau_\ell$ in \eqref{batting} has
irreducible elementary support. 


%

\end{remark}

\begin{remark}\label{rode} 
One may assume that each $\tau_\ell$ in \eqref{batting} has at
most one residue factor. Indeed, in \cite{PTY}, see also \cite[Corollary~3.5]{Atoulouse}, it is shown that
the Coleff-Herrera product 
$$
\dbar[1/t_{k+1}^{m_{k+1}}]\w\cdots\w\dbar
[1/t_{r}^{m_{r}}]
$$ 
equals the Bochner-Martinelli residue current of
$t_{k+1}^{m_{k+1}},\ldots, t_{r}^{m_{r}}$, which, see, e.g.,
\cite{A1}, is the direct image under a modification of a current of
the form $\alpha\w\dbar [1/f]$, cf., Example ~\ref{asmex} below. It follows, cf., 
\cite[Lemma 3.2]{litennot}, that \eqref{1elem} is the direct image under another modification 
of a finite sum of elementary currents with at most one residue factor. 
\end{remark}

\begin{prop}\label{1allan}
Assume that $\mu\in\PM$ has support on the subvariety $V\subset X$.

\smallskip\noindent
(i) If the holomorphic function $h$ vanishes on  $V$, then 
$\bar h \mu=0$ and $d\bar h\w\mu=0$.

\smallskip\noindent 
(ii) If $\mu$ has bidegree $(*,p)$ and $\codim V>p$, then  $\mu=0$.
\end{prop}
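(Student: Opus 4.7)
\emph{Proof plan.} The plan is to reduce both parts, using the local representation~\eqref{batting} and the projection formula, to the corresponding statements for a single elementary current on $\C^n$; these are then handled directly via the identities~\eqref{snok2}.

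For an elementary current $\tau$ as in~\eqref{1elem} with elementary support $L=\{t_{k+1}=\cdots=t_r=0\}$, the identities~\eqref{snok2} give $\bar t_\ell\tau=0$ and $d\bar t_\ell\w\tau=0$ for $\ell=k+1,\ldots,r$. Hence if $H$ is holomorphic with $H|_L=0$, the Nullstellensatz yields $H=\sum_{\ell>k}t_\ell H_\ell$ with $H_\ell$ holomorphic, and so $\bar H\tau=0$ and $d\bar H\w\tau=0$. Moreover, since each residue factor contributes $(0,1)$ to the bidegree, any elementary $\tau$ of bidegree $(*,p)$ satisfies $p\geq r-k=\codim L$.

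For the elementary version of (ii), $\codim V>p$ forces $\supp\tau\subsetneq L$; using the vanishings $\bar t_\ell\tau=d\bar t_\ell\w\tau=0$ to discard from $\alpha$ any dependence on $\bar t_\ell$ and $d\bar t_\ell$ for $\ell>k$, one finds that $\tau$ is determined by certain holomorphic Taylor coefficients of $\alpha$ in $t_{k+1},\ldots,t_r$ along $L$, viewed as smooth forms on $L$. These must be supported in the proper subvariety $L\cap V$ and so vanish by smoothness, giving $\tau=0$. The elementary version of (i) is then immediate provided $\supp\tau\subseteq V\subseteq L$, via the Nullstellensatz reduction above.

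The general case $\mu=\sum\pi_*\tau_\ell$ is handled by the projection formula—$\bar h\pi_*\tau_\ell=\pi_*(\overline{\pi^*h}\,\tau_\ell)$ and analogously for $d\bar h\w\mu$—provided the representation is arranged so that $\pi_\ell(L_\ell)\subseteq V$ for every term, after which the elementary argument applies termwise. The main obstacle is precisely this step: individual $\pi_*\tau_\ell$ need not be supported on $V$ even though the sum $\mu$ is. I expect this to be resolved either by refining the representation through further modifications resolving $V$ (so that $\pi^{-1}(V)$ becomes a union of coordinate subspaces) and absorbing the extra terms, or by using part (ii) as a filter—terms whose elementary supports map outside $V$ produce pushforwards with support of excessive codimension relative to their bidegree, and hence must cancel or vanish in the sum.
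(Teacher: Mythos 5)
Your elementary-level observations are the right starting point, and you have correctly located the crux: in a representation $\mu=\sum_\ell\pi_*\tau_\ell$ the individual pushforwards $\pi_*\tau_\ell$ need not be supported on $V$ even though their sum is, so the termwise argument does not apply as it stands. But you leave exactly this step unresolved ("I expect this to be resolved either by\ldots or by\ldots"), and that is the whole content of the proposition beyond the one-variable identities \eqref{snok2}. The way the paper (following \cite{AW2}, and the argument sketched in the authors' notes) closes this gap for (i) is concrete: by a further sequence of modifications (Hironaka) one arranges that $\pi^*h$ is a \emph{monomial} times a nonvanishing factor in coordinates adapted to each elementary $\tau_\ell$; one then splits the sum into the terms $\tau'_\ell$ whose elementary support lies in $\pi^{-1}\{h=0\}$ (i.e.\ some coordinate factor of the monomial $\pi^*h$ is a residue coordinate of $\tau'_\ell$) and the remaining terms $\tau''_\ell$. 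Since $h$ vanishes on $\supp\mu$ one has $\chi(|h|^2/\epsilon)\mu\equiv 0$, and letting $\epsilon\to0$ shows that $\sum_\ell\pi_*\tau''_\ell=0$; hence $\mu=\sum_\ell\pi_*\tau'_\ell$, and for these terms $\overline{\pi^*h}\,\tau'_\ell=0$ and $d\overline{\pi^*h}\w\tau'_\ell=0$ by \eqref{snok2}. Your "Nullstellensatz" reduction is a red herring here: what matters is not that $\pi^*h$ vanishes on the elementary support, but that after resolution it is a monomial containing a residue coordinate.

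Your treatment of (ii) has a second, independent problem: the codimension hypothesis $\codim V>p$ lives downstairs on $X$, while your dimension count ($p\geq\codim L$) lives upstairs. Modifications can strictly increase the codimension of the image of a set (an exceptional divisor can map to a point), and simple projections shift both the bidegree and the codimensions, so the inequality upstairs gives no control on $\codim\,\pi(L)$ versus the bidegree of $\pi_*\tau$. (Also, the "Taylor coefficients of $\alpha$ along $L$" are smooth forms \emph{times the remaining principal value factors}, not smooth forms, though that is repairable.) The standard proof of the dimension principle instead works downstairs and uses (i) as input: on $V_{reg}$ choose coordinates with $V=\{w_1=\cdots=w_k=0\}$, $k=\codim V$; the relations $\bar w_j\mu=0$, $d\bar w_j\w\mu=0$ together with the structure of currents supported on a submanifold force $\mu$ to contain the factor $d\bar w_1\w\cdots\w d\bar w_k$, hence to have antiholomorphic degree at least $k>p$, so $\mu=0$ on $V_{reg}$; one then concludes by induction over the stratification $V\supset V_{sing}\supset\cdots$. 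So both halves of your plan need the missing reduction supplied before they become proofs.
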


This proposition is from \cite{AW2}; for the adaption to nonsmooth $X$, see
\cite[Proposition~2.3]{AS}. Part (i) means
that the action of the current $\mu$ only involves holomorphic derivatives of test
forms. 
We refer to part (ii) as the \emph{dimension principle}.  
We will also need,  \cite[Proposition~1.2]{litennot}:

\begin{prop}\label{kraka}
If $\pi\colon X'\to X$ is a  modification,  
then 
$\pi_*\colon\PM(X')\to\PM(X)$
is surjective.
\end{prop}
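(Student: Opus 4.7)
The plan is to reduce to lifting a single elementary current through a modification, and then achieve this lift via Hironaka's theorem combined with an analytic continuation argument.

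Since the statement is local on $X$, by \eqref{batting} we may assume $\mu = \pi_* \tau$, where $\pi \colon Y \to \U \subset X$ is a composition of modifications, simple projections, and open inclusions as in \eqref{1struts}, and $\tau$ is elementary on $Y \subset \C^n$. Form the fiber product $Y \times_X X'$ and resolve singularities to obtain a smooth space $Y'$ equipped with a modification $q \colon Y' \to Y$ and a holomorphic map $\pi' \colon Y' \to X'$ satisfying $p \circ \pi' = \pi \circ q$. Proceeding inductively on the length of the composition defining $\pi$, and noting that the base change by $p$ of a modification (resp.\ simple projection, open inclusion) is again of the same type up to a further resolution of singularities, the map $\pi'$ can itself be factored as a composition of the form \eqref{1struts}. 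Granted a lift $\tau' \in \PM(Y')$ with $q_* \tau' = \tau$, the current $\mu' := \pi'_* \tau'$ will then lie in $\PM(X')$ and satisfy $p_* \mu' = \pi_* q_* \tau' = \mu$, as required.

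To construct $\tau'$, I would regularize as follows: for $\re \lambda \gg 0$, let $\tau_\lambda$ be the smooth form obtained from \eqref{1elem} by replacing each factor $1/t_j^{m_j}$ (including those inside the $\dbar$'s) with $|t_j|^{2\lambda}/t_j^{m_j}$, so that $\tau$ is the value at $\lambda = 0$ of the analytic continuation of $\lambda \mapsto \tau_\lambda$. Apply Hironaka's theorem to find a modification $\nu \colon \tilde Y \to Y'$ such that each $(q \circ \nu)^* t_j$ is a monomial times a unit in local coordinates on $\tilde Y$. Then $(q \circ \nu)^* \tau_\lambda$ is locally a tensor product, times a smooth form, of one-variable expressions of the form $|\zeta^a|^{2\lambda}/\zeta^{am}$ and their $\dbar$'s; by the standard partial-fractions identities for such expressions, this admits an analytic continuation to $\lambda = 0$ yielding a finite sum $\tilde\tau$ of elementary currents on $\tilde Y$. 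Set $\tau' := \nu_* \tilde\tau \in \PM(Y')$.

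To verify $q_* \tau' = \tau$, note that for $\re \lambda \gg 0$ the form $\tau_\lambda$ is smooth, and since $q \circ \nu$ is a modification (generic degree one) we have $(q \circ \nu)_* (q \circ \nu)^* \tau_\lambda = \tau_\lambda$; as analytic continuation in $\lambda$ commutes with proper push-forward,
$q_* \tau' = (q \circ \nu)_* \tilde\tau = (q \circ \nu)_* \big((q \circ \nu)^* \tau_\lambda\big)\big|_{\lambda = 0} = \tau$.
The main obstacle in this plan is justifying that the analytic continuation of $(q \circ \nu)^* \tau_\lambda$ to $\lambda = 0$ exists and is a pseudomeromorphic current; this is precisely why the monomialization step via Hironaka is needed, since it reduces the question to standard one-variable (or tensor-product) analytic continuations of the kind used in the definition of elementary currents.
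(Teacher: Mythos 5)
Your argument is correct and follows essentially the standard route: the paper itself does not prove Proposition~\ref{kraka} but cites \cite[Proposition~1.2]{litennot}, where the result is obtained in just this way, by dominating the chain \eqref{1struts} by a common roof over $X'$ and lifting elementary currents through the resulting modification via Hironaka together with the $|t|^{2\lambda}$-regularization. The points you leave implicit --- that the joint analytic continuation of the pulled-back, monomialized factors to $\lambda=0$ produces a locally finite sum of elementary currents in local charts (after a partition of unity), and that push-forward commutes with this continuation --- are exactly the standard computations used elsewhere in the paper (cf.\ Remark~\ref{rode} and the proof of Proposition~\ref{1ekorre}) and present no difficulty.
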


\subsection{Basic operations on \pmm currents}\label{restsec}
Assume that  $\mu$ is \pmm  on $X$ and that $V\subset X$ is a subvariety. It was proved in 
\cite{AW2}, see also \cite{AS}, that the restriction
of $\mu$ to the open set $X\setminus V$  has a natural \pmm extension $\1_{X\setminus V}\mu$
to $X$. 
In \cite{AW2} it was obtained as the value
\begin{equation}\label{pluto}
\1_{X\setminus V}\mu:=|f|^{2\lambda}\mu|_{\lambda=0}
\end{equation}
at $\lambda=0$ of the analytic continuation of the current valued function
$\lambda\mapsto |f|^{2\lambda}\mu$, where $f$ is any tuple of
holomorphic functions such
that $Z(f)=V$. 
It follows that 
\begin{equation*}
\1_V\mu:=\mu-\1_{X\setminus V}\mu
\end{equation*}
has support on $V$. It is proved in
\cite{AW2} that this operation extends to all constructible
sets and that \eqref{skolgard} holds. 
If $\alpha$ is a smooth form, then
\begin{equation}\label{brutus1}
\1_V (\alpha\w\mu)=\alpha\w \1_V\mu.
\end{equation}
Moreover, 
if $\pi\colon X'\to X$ is a modification, a simple projection or an open
inclusion and $\mu=\pi_*\mu'$, then
\begin{equation}\label{brutus2}
\1_V\mu=\pi_*\big(\1_{\pi^{-1}V}\mu'\big).
\end{equation}

In this paper it is convenient to express $\1_{X\setminus V}\mu$ as a limit of currents that
are \pmm themselves. 

\begin{lma}\label{3apsko}
Let $V$ be a germ of a subvariety at $x\in X$, let $f$ be a tuple of 
holomorphic  functions whose
common zero set is precisely $V$, let $v$ be a positive and smooth function, and let $\chi\sim\chi_{[1,\infty)}$.
For each germ of a \pmm current $\mu$ at $x$ we have
\begin{equation}\label{restrikdef2}
\mathbf{1}_{X\setminus V}\mu= \lim_{\epsilon\to 0} \chi(|f|^2v/\epsilon) \mu.
\end{equation}
\end{lma}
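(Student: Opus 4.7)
The plan is to reduce to the case of an elementary current $\tau$ in coordinates where $|f|^2v$ is a monomial times a smooth positive factor, and then to appeal to the one-variable identity $[1/s^m] = \lim_{\delta\to 0}\chi(|s|^2/\delta)(1/s^m)$ recalled just before \eqref{1elem}.

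First I would reduce to the elementary case. Writing $\mu = \sum \pi_*\tau$ as in \eqref{batting}, the smooth function $\chi(|f|^2 v/\delta)$ pulls back through $\pi$ to $\chi(|\pi^*f|^2 (\pi^*v)/\delta)$, which is of the same form (with $\pi^*v$ smooth and nonvanishing and $Z(\pi^*f) = \pi^{-1}V$); hence
\[
\chi(|f|^2 v/\delta)\,\mu = \sum \pi_*\bigl(\chi(|\pi^*f|^2 (\pi^*v)/\delta)\,\tau\bigr).
\]
By continuity of $\pi_*$ and the pushforward rule \eqref{brutus2}, it suffices to prove the lemma for an elementary $\tau$ on an open subset of $\C^n$. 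With $\rho\colon W \to \C^n$ a modification principalizing the ideal $(f_1,\dots,f_r)$, Proposition~\ref{kraka} gives $\tau = \rho_*\sigma$; re-running the same reduction with $\rho$, we may further assume that $\tau$ is elementary in coordinates $t$ on an open subset of $\C^n$ and $|f|^2 v = |t^\alpha|^2 \tilde v$ for some monomial $t^\alpha$ and smooth positive $\tilde v$. In particular $V = Z(t^\alpha) = \bigcup_{\alpha_i>0}\{t_i=0\}$.

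In this monomial situation, write $\tau = \beta \w \prod_{j=1}^k [1/t_j^{m_j}] \w \prod_{\ell=k+1}^r \bar\partial[1/t_\ell^{m_\ell}]$ with $\beta$ smooth. If $\alpha_\ell > 0$ for some residue index $\ell$, then $\supp\tau \subset \{t_\ell=0\} \subset V$, so by Proposition~\ref{1allan} one checks that $\mathbf{1}_{X\setminus V}\tau = 0$. On the other hand, $F_\delta(t) := \chi(|t^\alpha|^2 \tilde v/\delta)$ depends on $(t_\ell,\bar t_\ell)$ only through $|t_\ell|^{2\alpha_\ell}$, so a direct computation shows that every pure holomorphic derivative $\partial_{t_\ell}^j F_\delta$ vanishes on $\{t_\ell=0\}$; by \eqref{snok1} this annihilates the residue factor $\bar\partial[1/t_\ell^{m_\ell}]$, giving $F_\delta\,\tau = 0$ for every $\delta$. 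If instead $\alpha_\ell = 0$ for every residue index, then $t^\alpha$ involves only principal-value or free variables; the residue factors of $\tau$ are unaffected by $F_\delta$, and the desired limit reduces, variable by variable, to the one-variable identity recalled above.

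The delicate point is the vanishing of all $\partial_{t_\ell}^j F_\delta$ on $\{t_\ell=0\}$ when $\alpha_\ell > 0$; this rests on the cutoff being applied to the squared modulus $|f|^2$, so that each holomorphic differentiation in a residue direction brings down a factor of $\bar t_\ell$. Without this squared-modulus structure the lemma would fail.
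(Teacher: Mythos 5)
Your route is genuinely different from the paper's: the paper principalizes the ideal $(f)$ by a single modification $\pi$, so that $|\pi^*f|^2\pi^*v=|f^0|^2|f'|^2\pi^*v$ with $f^0$ one section of a line bundle and $f'$ nonvanishing, lifts $\mu$ through $\pi$ by Proposition~\ref{kraka}, quotes Lemma~6 of \cite{LS} for that single-section case, and pushes down with \eqref{brutus2}. You instead try to get all the way down to an elementary current in coordinates where $|f|^2v=|t^\alpha|^2\tilde v$ and finish by hand. The first place this breaks is the monomialization itself: Proposition~\ref{kraka} gives $\tau=\rho_*\sigma$ with $\sigma$ pseudomeromorphic, not elementary, so ``re-running the reduction'' lands you on further spaces where the new elementary currents are elementary with respect to coordinates that have nothing to do with $\rho^*f$; there is no reason the pulled-back generator is a monomial in those coordinates. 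What you need is a simultaneous resolution that keeps the current elementary while monomializing the function (this is Lemma~3.1 of \cite{litennot}); it is true, but it is a substantive statement you would have to prove or cite, not a consequence of repeating the first reduction.

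The more serious gap is the concluding convergence. When no residue variable divides $t^\alpha$, the cutoff $\chi(|t_1^{\alpha_1}\cdots t_k^{\alpha_k}|^2\tilde v/\delta)$ does not factor as a product of one-variable cutoffs (and $\tilde v$ may depend on the residue variables, so $F_\delta$ does not simply pass through the residue factors either). The statement that this coupled regularization of $\prod_j[1/t_j^{m_j}]$ converges to the principal value is the existence of an \emph{unrestricted} limit for a multivariable principal value; it is exactly the analytic content of Lemma~6 in \cite{LS}, whose proof the paper explicitly flags as resting on the quite involved Theorem~1.1 there. Declaring that it ``reduces, variable by variable, to the one-variable identity'' therefore leaves the heart of the lemma unproved. (By contrast, your treatment of the case where a residue variable $t_\ell$ does divide the monomial is fine: every holomorphic $t_\ell$-derivative of $F_\delta$ retains a factor $\bar t_\ell^{\alpha_\ell}$, so by \eqref{snok1} it annihilates $\dbar[1/t_\ell^{m_\ell}]$, and $\1_{X\setminus V}\tau=0$ since $\supp\tau\subset V$.)
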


Because of the factor $v$, the lemma holds just as well for a holomorphic section
$f$ of a Hermitian vector bundle.

In case $V$ is a hypersurface and $f$ is one single holomorphic function, or
section of a line bundle, the lemma
follows directly from Lemma~6 in \cite{LS} by just taking $T=f\mu$.  We will reduce the general case to this lemma.
The proof of this lemma relies on the proof of Theorem~1.1 in
\cite{LS}, which is quite involved. 
For a more direct proof of Lemma~\ref{3apsko}, see
the proof of Proposition~3.4 in \cite[Ch.2]{Abook}.

\begin{proof} Let $\pi\colon X'\to X$ be a smooth modification such that $\pi^* f=f^0f'$, where
$f^0$ is a holomorphic section of a Hermitian line bundle $L\to X'$
and $f'$ is a nonvanishing tuple of holomorphic sections
of $L^{-1}$. In view of Proposition~\ref{kraka} we  can assume that
$\mu=\pi_*\mu'$, where $\mu'$ is pseudomeromorphic on $X'$.
Then 
$$
|\pi^* f|^2\pi^*v=|f^0|^2 |f'|^2\pi^*v,
$$
and from \cite[Lemma~6]{LS} we thus have that
$$
\lim_{\epsilon\to 0}\chi(|\pi^* f|^2\pi^*v/\epsilon)\mu'=\1_{X'\setminus\pi^{-1}V}\mu'.
$$
In view of \eqref{brutus2} we get \eqref{restrikdef2}.
\end{proof}

\begin{remark} Lemma~\ref{3apsko} holds even if
  $\chi=\chi_{[1,\infty)}$. However, in  general it is not  
obvious what $\chi(|f|^2v/\epsilon)\mu$ means. Let $\chi^\delta$ be smooth approximands
such that  $\chi^\delta\to \chi_{[1,\infty)}$.  
It follows from the proof of Lemma~6 in \cite{LS} that for small
enough $\epsilon$, depending on $\mu$, $f$, and $v$, the limit 
$\lim_{\delta\to 0}\chi^\delta(|f|^2v/\epsilon)\mu$ exists and is independent of the choice of $\chi^\delta$; thus we can take it
as the definition of $\chi(|f|^2v/\epsilon)\mu$.  
In fact, it turns out that after a suitable change of real coordinates one can
realize $\chi(|f|^2v/\epsilon)\mu$ as a tensor product of two
currents. 
%
In particular
we get 
\[
\chi(|f|^2/\epsilon)\frac{1}{f}.\xi=\int_{|f|^2>\epsilon}\frac{\xi}{f},
\]
cf., \eqref{apa}. 
\end{remark}

We will need the following observation.

\begin{lma}\label{batong}
If $\mu$ has the form \eqref{batting},  then
$$
\1_V\mu=\sum_{\supp\tau_\ell\subset\pi^{-1}V} \pi_*\tau_\ell.
$$
\end{lma}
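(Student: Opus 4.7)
The plan is to reduce, by linearity and an iterated application of the push-forward identity \eqref{brutus2} along the composition in \eqref{1struts}, to the formula
\begin{equation*}
\1_V\mu=\sum_\ell\pi_*\bigl(\1_{\pi^{-1}V}\tau_\ell\bigr),
\end{equation*}
and then to establish the following dichotomy for an elementary current $\tau$ on $\U_m$ and a subvariety $W\subset\U_m$: one has $\1_W\tau=\tau$ if $\supp\tau\subset W$, and $\1_W\tau=0$ otherwise. Granting this dichotomy, only the terms with $\supp\tau_\ell\subset\pi^{-1}V$ survive, and the lemma follows.

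The first case is immediate from Lemma~\ref{3apsko}: take any holomorphic tuple $g$ with $Z(g)=W$; since $\chi$ vanishes in a \nbh of $0$ and $g\equiv 0$ on $\supp\tau\subset W$, the cut-off $\chi(|g|^2v/\delta)$ vanishes in a \nbh of $\supp\tau$ for every $\delta>0$, so $\chi(|g|^2v/\delta)\tau\equiv 0$ and passing to the limit gives $\1_{\U_m\setminus W}\tau=0$, i.e., $\1_W\tau=\tau$.

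For the second case, I would first note that $\supp\tau$ lies in the elementary support $E=\{t_{k+1}=\cdots=t_r=0\}$, which is irreducible as a coordinate affine subspace; hence $\supp\tau\not\subset W$ forces $E\not\subset W$, and consequently some component $h$ of a defining tuple for $W$ satisfies $h|_E\not\equiv 0$. Since $W\subset Z(h)$, \eqref{skolgard} reduces the task to showing $\1_{Z(h)}\tau=0$ for the single holomorphic function $h$. This last identity is the main obstacle -- a \emph{standard extension property} for elementary currents along a hypersurface that does not contain the elementary support. I would prove it by analyzing the analytic continuation of $|h|^{2\lambda}\tau$: using the tensor product structure of $\tau$, pairing $\tau$ with $|h|^{2\lambda}\phi$ yields principal values in $t_1,\ldots,t_k$ together with holomorphic derivatives in $t_j$ ($j>k$) of order $\leq m_j-1$ evaluated at $t_{k+1}=\cdots=t_r=0$, applied to $|h|^{2\lambda}\alpha\w\phi$. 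By Leibniz, every term in which a derivative hits $|h|^{2\lambda}$ carries an extra factor of $\lambda$ and thus vanishes at $\lambda=0$, while in the lone surviving term $|h|^{2\lambda}$ is restricted to $E$, where $h|_E\not\equiv 0$ gives $|h|^{2\lambda}|_E|_{\lambda=0}=1$ as a distribution on $E$. The outcome equals $\hake{\tau,\phi}$, so $\1_{Z(h)}\tau=0$. An alternative route is to resolve $Z(h)$ by a modification that turns $h$ into a monomial, lift $\tau$ via Proposition~\ref{kraka}, apply the one-variable identity handled by Lemma~6 of \cite{LS}, and push back down through \eqref{brutus2}.
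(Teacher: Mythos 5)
Your overall structure---the reduction via \eqref{brutus2} to $\1_V\mu=\sum_\ell\pi_*(\1_{\pi^{-1}V}\tau_\ell)$ and the dichotomy for an elementary current---is exactly the paper's, and your treatment of the case $\supp\tau\subset W$ is fine. The problem is the case $\supp\tau\not\subset W$. The reduction to a single $h$ with $h|_E\not\equiv 0$ via \eqref{skolgard} is legitimate, but the Leibniz argument for $\1_{Z(h)}\tau=0$ has a genuine gap: the step ``every term in which a derivative hits $|h|^{2\lambda}$ carries an extra factor of $\lambda$ and thus vanishes at $\lambda=0$'' is not valid as stated. A factor of $\lambda$ forces vanishing at $\lambda=0$ only if the cofactor is known to be pole-free there; here the cofactors are of the form $|h|^{2\lambda}h^{-j}$ times a principal value current, and showing that these are analytic at $\lambda=0$ is essentially the content of (an iterated) Proposition~\ref{groda1}, which you do not invoke. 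Indeed, the same reasoning applied to $\dbar|f|^{2\lambda}\w (1/f)=\lambda|f|^{2\lambda}\,d\bar f/(\bar f f)$ would ``prove'' that every residue current vanishes. Likewise, evaluating the surviving term requires that $\1_{Z(h)\cap E}$ of a principal value current on $E$ vanish---a dimension-principle statement---not merely that $|h|^{2\lambda}|_{\lambda=0}=1$ in $L^1_{loc}$, since the pairing is against a current rather than a test function. Your fallback route (resolve so that $h$ becomes a monomial, lift by Proposition~\ref{kraka}, use the one-variable statements and push down by \eqref{brutus2}) is the version that can actually be completed.

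The paper closes this case with a two-line application of the dimension principle instead: write $\tau_\ell=\alpha\w\tau'$ with $\alpha$ smooth and $\tau'$ elementary of bidegree $(0,q)$, where $q$ is the codimension of the elementary support $H$. Since $H$ is irreducible (a coordinate subspace) and $H\not\subset\pi^{-1}V$, the set $H\cap\pi^{-1}V$ has codimension at least $q+1$; by \eqref{brutus1}, $\1_{\pi^{-1}V}\tau_\ell=\alpha\w\1_{\pi^{-1}V}\tau'$, and $\1_{\pi^{-1}V}\tau'$ is a $(0,q)$-current supported on $H\cap\pi^{-1}V$, hence zero by Proposition~\ref{1allan}(ii). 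You already isolated the key geometric fact (irreducibility of the elementary support forcing the codimension jump); pairing it with the bidegree count, rather than with the $\lambda$-calculus, is what closes the argument cleanly.
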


It follows from the proof below that we just as well can take the sum over all $\ell$ such that
the elementary supports of $\tau_\ell$ are  contained in $\pi^{-1}V$. 

\begin{proof}
In view of \eqref{brutus2} we have that
$$
\1_V\mu=\sum_\ell \pi_*\big(\1_{\pi^{-1}V} \tau_\ell\big).
$$
If $\supp\tau_\ell\subset \pi^{-1}V$,  then clearly 
$\1_{\pi^{-1}V} \tau_\ell=\tau_\ell$.  
We now claim that if $\supp\tau_\ell$ is not contained
in $\pi^{-1}V$, then 
$\1_{\pi^{-1}V} \tau_\ell=0$. If $\supp \tau_\ell \not\subset
\pi^{-1}V$,  the elementary support $H$ of $\tau_\ell$
is not contained in  $\pi^{-1}V$. Assume that $H$ has codimension
$q$. Then $\tau_\ell$ is of the form $\tau_\ell=\alpha\wedge\tau'$,
where $\alpha$ is smooth and $\tau'$ is elementary of bidegree
$(0,q)$. It follows from \eqref{brutus1} that
$$
\1_{\pi^{-1}V}\tau_\ell=\alpha\w \1_{\pi^{-1}V}\tau'.
$$
By Remark ~\ref{gryning} we may assume that $H$ is irreducible, 
and therefore
$\pi^{-1}V\cap H$  has codimension at least $q+1$ in $\U$.
Since $\1_{\pi^{-1}V}\tau'$ has support on $\pi^{-1}V\cap H$ it must
vanish in view of 
the dimension principle.
Thus the lemma follows.
\end{proof}

We now consider another fundamental operation on $\PM$ introduced in \cite{AW2}.

\begin{prop}[\cite{AW2}]\label{groda1}
Given  a holomorphic function $h$ and a \pmm current $\mu$ there is a \pmm current
$T$ such that $T=(1/h)\mu$ in the open set where $h\neq 0$ and $\1_{\{h=0\}}T=0$. 
\end{prop}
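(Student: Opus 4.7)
My plan is to define
$$
T:=\frac{|h|^{2\lambda}}{h}\mu\Big|_{\lambda=0}
$$
as the value at $\lambda=0$ of a current-valued analytic continuation, and to verify that $T\in\PM$ and has the stated properties. To make the analytic continuation tractable, I would first resolve $h$: by Hironaka there is a smooth modification $\pi\colon X'\to X$ on which $\pi^*h=u\prod_j t_j^{a_j}$ in local coordinates, with $u$ a non-vanishing holomorphic factor. By Proposition~\ref{kraka} I would write $\mu=\pi_*\mu'$ with $\mu'\in\PM(X')$ locally a finite sum of elementary currents $\tau$.

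Pulled back, the regularization becomes
$$
\frac{|\pi^*h|^{2\lambda}}{\pi^*h}=\frac{|u|^{2\lambda}}{u}\cdot\prod_j\frac{|t_j|^{2\lambda a_j}}{t_j^{a_j}},
$$
where the first factor is smooth in $\lambda$ near $0$. Since each elementary $\tau$ is a tensor product in $t_1,\ldots,t_n$, the remaining product acts factor by factor, and the analytic continuation reduces to a case analysis in each variable $t_j$ with $a_j>0$: if $\tau$ has no factor in $t_j$, the a.c.\ at $\lambda=0$ yields a new principal value factor $[1/t_j^{a_j}]$; if it has $[1/t_j^{m_j}]$, the a.c.\ yields $[1/t_j^{m_j+a_j}]$; and if it has $\dbar[1/t_j^{m_j}]$, the a.c.\ vanishes, since in pairing with a test form via \eqref{snok1} one encounters a factor $\bar t_j^\lambda$ evaluated at $t_j=0$, which vanishes identically for $\re\lambda>0$. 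Thus the a.c.\ exists on $X'$ and yields a locally finite sum $T'$ of elementary currents; I would set $T:=\pi_*T'\in\PM(X)$. That $T=(1/h)\mu$ off $\{h=0\}$ is immediate since $|h|^{2\lambda}|_{\lambda=0}=1$ there.

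Finally, to prove $\1_{\{h=0\}}T=0$, I would combine \eqref{brutus2} with Lemma~\ref{batong}: $\1_{\{h=0\}}T=\pi_*(\1_{\pi^{-1}\{h=0\}}T')$, and the latter picks out only those summands of $T'$ whose support is contained in $\pi^{-1}\{h=0\}=\bigcup_{a_j>0}\{t_j=0\}$. By the case analysis above, every nonzero summand of $T'$ has a principal value factor in every $t_j$ with $a_j>0$, so its elementary support---an irreducible coordinate subspace in which each such $t_j$ is free---is not contained in any of the components $\{t_j=0\}$ and hence not in the union. Thus the restriction vanishes. The main point to check carefully is the one-variable vanishing $[1/t^a]\dbar[1/t^m]=0$, since this is precisely the mechanism driving the no-mass property; once it is in hand, everything else follows from the restriction calculus already developed.
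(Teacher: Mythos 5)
The paper does not actually prove this proposition: it is imported from \cite{AW2}, and all the paper records is the defining formula $T=\frac{|h|^{2\lambda}}{h}\mu\big|_{\lambda=0}$ together with the consequence \eqref{snart}. So the comparison has to be with that definition and with the closely analogous argument the paper does carry out, namely the proof of Proposition~\ref{1ekorre}. Measured against those, your strategy is the right one: resolve $h$ to a monomial, reduce to a one-variable analytic continuation acting factor by factor on an elementary tensor product, and deduce $\1_{\{h=0\}}T=0$ from the fact that every surviving summand acquires a principal value factor in each coordinate dividing the monomial, so that its elementary support (being an irreducible coordinate subspace) is not contained in $\pi^{-1}\{h=0\}$ and the dimension-principle step inside the proof of Lemma~\ref{batong} kills its restriction. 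Your one-variable case analysis is correct, including the key vanishing of $\frac{|t|^{2\lambda a}}{t^{a}}\dbar[1/t^{m}]$ for $\re\lambda\gg 0$ via \eqref{snok1}.

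There is, however, a genuine gap in the reduction step. After resolving $h$ on $X'$ you invoke Proposition~\ref{kraka} to write $\mu=\pi_*\mu'$ and then treat $\mu'$ as ``locally a finite sum of elementary currents'' that are tensor products in the very coordinates $t_1,\dots,t_n$ in which $\pi^*h=u\prod_j t_j^{a_j}$. Neither half of this is automatic: a general section of $\PM(X')$ is a locally finite sum of push-forwards of elementary currents under further towers as in \eqref{1struts}, not a sum of elementary currents on $X'$ itself; and even starting from a single germ $\mu=\pi_*\tau$ with $\tau$ elementary on some $\U$, the coordinates in which $\tau$ is a tensor product have no reason to coincide with coordinates in which the pullback of $h$ is a monomial. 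The order of operations must be reversed: first write $\mu$ locally as $\sum_\ell\pi_*\tau_\ell$ with $\tau_\ell$ elementary, then apply Hironaka \emph{on top of} the space where $\tau_\ell$ lives to get $\nu\colon\U'\to\U$ such that $\nu^*\pi^*h$ \emph{and} all the $\nu^*t_j$ are simultaneously monomials times nonvanishing factors in common local coordinates; then $\tau_\ell=\nu_*\tau'_\ell$ with $\tau'_\ell$ a sum of elementary currents to which your factor-by-factor analysis applies. This is exactly the device used in the paper's proof of Proposition~\ref{1ekorre}, and with that repair your argument goes through. (A final small point: the paper explicitly allows $h$ to vanish identically on a component of $X$; on such a component one must simply set $T=0$, which your monomial normal form does not cover.)
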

Here $h$ may just as well be a holomorphic section of a line bundle. 
Clearly this current $T$ must be unique and we denote it by 
$[1/h] \mu$. In \cite{AW2} the current $[1/h]\mu$ was defined as
$(|h|^{2\lambda}\mu/h)|_{\lambda=0}.$
\begin{remark}\label{linne}
Notice that\footnote{We have not exluded the
possibility that $h$  vanishes identically on some (or all) irreducible components of $X$.}
$
h[1/h]\mu=\1_{\{h\neq 0\}}\mu; 
$ in particular, $h[1/h]\mu\neq \mu$ in general. For example, 
$z[1/z]\dbar[1/z]=0$.
\end{remark}

Since $[1/h]\mu=(1/h)\mu$ in $\{h\neq 0\}$ and
$[1/h]\mu=\1_{\{h\neq 0\}}[1/h]\mu$,   
it follows from \eqref{restrikdef2}  that
\begin{equation}\label{snart}
\Big[\frac{1}{h}\Big]\mu=\lim_{\epsilon\to 0}\chi(|h|^2v/\epsilon)\frac{1}{h}\mu. 
\end{equation}
One can also define
\begin{equation}\label{lei}
\dbar\Big[\frac{1}{h}\Big]\w\mu:=\dbar\Big(\Big[\frac{1}{h}\Big]\mu\Big)-\Big[\frac{1}{h}\Big]\dbar\mu,
\end{equation}
i.e., so that "Leibniz's rule" holds.
Notice that if $\pi\colon X'\to X$ is a modification and $\mu=\pi_*\mu'$, then
\begin{equation}\label{polly}
\Big[\frac{1}{h}\Big]\mu=\pi_*\Big(\Big[\frac{1}{\pi^*h}\Big]\mu'\Big), \quad 
\dbar\Big[\frac{1}{h}\Big]\w\mu=\pi_*\Big(\dbar\Big[\frac{1}{\pi^*h}\Big]\w \mu'\big).
\end{equation}
This follows, e.g.,  from \eqref{kondor} and \eqref{snart}. 
It is also readily checked  that  
\begin{equation}\label{lei1}
\dbar\Big(\dbar\Big[\frac{1}{h}\Big]\w\mu\Big)=-\dbar\Big[\frac{1}{h}\Big]\w\dbar\mu.
\end{equation}
\begin{remark}\label{alligator} 
Since $[1/f][1/g]=[1/(fg)]=[1/g][1/f]$ it follows from \eqref{lei} that
$$
\dbar\Big[\frac{1}{f}\Big]\cdot\Big[\frac{1}{g}\Big]+\Big[\frac{1}{f}\Big]\dbar\Big[\frac{1}{g}\Big]=\dbar\Big[\frac{1}{g}\Big]\cdot\Big[\frac{1}{f}\Big]+
\Big[\frac{1}{g}\Big]\dbar\Big[\frac{1}{f}\Big].
$$
However, it is not true in general that $[1/g]\dbar[1/f]=\dbar[1/f]\cdot[1/g]$. For instance,
$[1/z]\dbar[1/z]=0$, whereas $\dbar[1/z]\cdot [1/z]=\dbar[1/z^2]$. 
\end{remark}

\smallskip
We now consider 
tensor products and direct images under simple projections.

\begin{lma}\label{1tensor}
If $\mu\in\PM_X$ and $\mu'\in\PM_{X'}$, then $\mu\otimes \mu'\in\PM_{X\times X'}$.
\end{lma}

This is precisely \cite[Lemma 3.3]{litennot}.
It is easy to verify that
\begin{equation}\label{pelargonia2}
\1_{V\times V'} \mu\otimes \mu'=\1_V \mu\otimes \1_{V'} \mu'.
\end{equation}

\begin{lma}\label{bock}
Assume that $p\colon Z\times W\to Z$ is a simple projection. If $\mu$ is in $\PM_{Z\times W}$ and
$p^{-1} K\cap \supp \mu$ is compact for each compact set $K\subset Z$, then
$p_*\mu$ is in $\PM_Z$.  
\end{lma}

\begin{proof}
Since pseudomeromorphicity is a local property, after 
multiplying $\mu$ if necessary by a suitable cutoff function we can 
assume that $\mu$ has compact support. By compactness and a partition of
unity we then have  a finite
representation $\mu=\sum_\ell \pi_*\tau_\ell$.  
Now the lemma follows from the very definition of $\PM$.
\end{proof}

\begin{ex}
Assume that $\tau$ is an elementary current on $X$, $p$ is a simple
projection $X\times X'\to X$, and $\chi$ is any test form in $X'$
with total integral $1$.
Then the tensor product $\tau\otimes\chi$ is an elementary current
in $X\times X'$ such that $p_*(\tau\otimes\chi)=\tau$.
\end{ex}

The following result provides a new, quite natural definition of pseudomeromorphicity.

\begin{thm}\label{grus}
(i) \  Assume that $X$ is smooth. Then a  germ of a current $\mu$ at $x\in X$ is \pmm if and only if
it is a finite sum
\begin{equation}\label{apa1}
\mu =\sum_\ell (f_\ell)_* \tau_\ell,
\end{equation}
where $f_\ell\colon \U_\ell\to X$ are holomorphic mappings and $\tau_\ell$ are elementary.
\smallskip

\noindent(ii)  If $X$ is a reduced space of pure dimension and $\pi\colon X'\to X$ is a smooth
modification, then a current $\mu$ on $X$ is pseudomeromorphic if and only if
there is a \pmm current $\mu'$ on $X'$ such that $\mu=\pi_*  \mu'$.
\end{thm}

\begin{proof}
By definition a germ of a \pmm current is of the form \eqref{apa1}.   Now assume that $f\colon \U\to X$ is any holomorphic mapping and $\tau$ is elementary in $\U\subset \C^N$.   Let $F\colon \U\to \U\times X$ be the
mapping $F(s)=(s,f(s))$.   Let $\widetilde F$ be $F$ considered as a biholomorphism onto the graph $\Gamma\subset \U\times X$ and let $i\colon \Gamma\to \U\times X$ be the natural injection. Then clearly $\widetilde F_*\tau$ is
\pmm on $\Gamma$ and in view of  \cite[Theorem 1.1(i)]{litennot},   $F_*\tau=i_*\widetilde F_*\tau$ is \pmm in
$\U\times X$. Clearly, it has compact support in $\U\times X$. If $p$ is the projection $\U\times X\to X$, we can therefore apply Lemma ~\ref{bock}, and conclude that $f_*\tau=p_*F_*\tau$ is \pmm in $X$.  Thus
part (i) is proved.  
Part (ii)  is just  Proposition ~\ref{kraka}. 
\end{proof}

\begin{cor}\label{grus1} 
Assume that  $f\colon W\to X$ is a holomorphic mapping and $X$ is smooth. If 
$\mu$ is \pmm on $W$ with compact support, then $f_*\mu$ is \pmm on $X$.
\end{cor}

\begin{proof} 
We may assume that $\mu=\pi_*\tau$, where $\pi\colon\U\to W$ is a mapping as in the definition of pseudomeromorphicity and $\tau$ is elementary in $\U$. Then we can apply 
Theorem ~\ref{grus} (i) to the mapping $f\circ\pi\colon \U\to X$.
It follows 
that $f_*\mu=f_*\pi_*\tau=(f\circ\pi)_*\tau$ is \pmm in $X$.
\end{proof}

\begin{remark} Notice that in the proof of Theorem ~\ref{grus} 
we only used \cite[Theorem 1.1(i)]{litennot}, which asserts that $i_*$ maps
$\PM_W$ into $\PM_X$ if $i:W\to X$ is an embedding of a reduced
pure-dimensional space $W$ into a manifold $X$, in the relatively simple case
when $W$ is a smooth submanifold. The general case now follows from Corollary ~\ref{grus1}. 
Part (ii) of \cite[Theorem 1.1]{litennot} is a partial converse:
{\it If $\mu=i_*\nu$ is \pmm in $X$ and $\1_{W_{sing}}\mu=0$,
then $\nu$ is \pmm on $W$.}  
The proof of this fact relies on the possibility to make a so-called strong resolution. This means that   there is
a resolution $X'\to X$ that is a biholomorphism outside $W$, and such that the strict transform of $W$ is
a smooth resolution of $W$. 
\end{remark}

\section{Action of holomorphic differential operators and vector fields}\label{kolik}
Let $X$ be a reduced analytic space of pure dimension. 
We already know that $\partial$ maps $\PM_X$ into itself. We shall now consider a more
general statement, and to this end we need the following result that is interesting in itself.

\begin{prop}\label{1ekorre} 
Assume that $\mu\in\PM_x$ where $x\in X$. 
If $h\in\Ok_x$ is not identically zero on any irreducible component of
$X$ at $x$, 
then there is $\mu'\in\PM_x$ such that
$h\mu'=\mu$.   
\end{prop}

\begin{remark}\label{los} 
By a partition of unity we can get a global such $\mu'$ if $\mu$ and
$h$ are global. If $\mu$ has compact support in $\U\subset X$ we can
choose $\mu'$ with compact support in $\U$. 
\end{remark}

\begin{remark} If $\mu$ has support on $V$ we may assume as well that $\mu'$ has. Indeed,
$\mu=\1_V\mu=\1_V h\mu'=h\1_V\mu'$, so we can replace a given solution $\mu'$ by $\1_V\mu'$.
\end{remark}

\begin{ex} Proposition ~\ref{1ekorre}  is not true if $h$ is anti-holomorphic. In fact, if
$\bar z \mu'=1$, then  $[1/z]\mu'$ is equal to $1/|z|^2$ outside $0$.
Thus $\lim_{\epsilon\to 0}\chi(|z|^2/\epsilon)\mu'/z$ does not exist, and hence $\mu'$ cannot
be pseudomeromorphic, cf.,  Proposition ~\ref{groda1} and \eqref{snart}. 
\end{ex}

%
\begin{proof}[Proof of Proposition~\ref{1ekorre}] 
First assume that $\tau$ is an elementary \pmm current in $\C^N_t$ and
$h$ is a monomial.  By induction it is enough to assume that $h=t_1$.
If $t_1$ is a residue factor 
in $\tau$,
then we just raise the power of $t_1$ in that factor one unit. Otherwise
we take $\tau'=(1/t_1)\tau$. Then $h\tau'=\tau$. 

We may assume that 
$\mu=\pi_* \tau$, where $\pi:\U\to X$ and $\tau$ is elementary of the form \eqref{1elem}. 
By Hironaka's theorem we can find a modification $\nu:\U'\to \U$ such
that, locally in $\U'$, $\nu^*\pi^* h$ is a monomial and $\nu^* t_j$ are monomials (times
nonvanishing functions). By a partition of unity in $\U'$ and repeated use of \eqref{polly} 
it follows that $\tau$ is a finite sum of currents $\nu_*\tau'$, where 
\begin{equation*}
\tau':=\nu^*\alpha\w\Big[\frac{1}{\nu^*t_1^{m_1}}\Big]\cdots\Big[\frac{1}{\nu^*t_k^{m_k}}\Big]\dbar\Big[\frac{1}{\nu^*t_{k+1}^{m_{k+1}}}\Big]\w\ldots
\w \dbar\Big[\frac{1}{\nu^*t_r^{m_r}}\Big].
\end{equation*}
Each such term is a sum of elementary currents $\tau_\ell$
in view of \eqref{lei}. 
By the first part of the proof there are elementary currents $\tau_\ell'$ in $\U'$ such that
$\nu^*\pi^* h ~\tau_\ell'=\tau_\ell$.  Now the proposition follows in view of
\eqref{kondor}.
\end{proof}

\begin{thm}\label{storknar2}
Assume that $X$ is smooth 
at $x\in X$.  

\noindent (i)
If  $z$ is a local holomorphic coordinate system at $x$ and 
\begin{equation}\label{pluck}
\mu=\sum'_{|I|=p}\mu_I\w dz_I
\end{equation}
is a germ in $\PM_x$, then each $\mu_I$ is in $\PM_x$.  

\smallskip
\noindent (ii)\ If $\xi$ is a germ of a holomorphic vector field, then 
the contraction $\xi\neg \mu$ and the Lie derivative $L_\xi \mu$ are in $\PM_x$.  
\end{thm}

Notice that (ii) is not true for anti-holomorphic vector fields.
For example,  
$\mu=
(\partial/\partial \bar z)\neg\dbar(1/z)$
is a nonzero current of degree $0$ with support at $0$. In view of the dimension principle,
it cannot be pseudomeromorphic.

\begin{proof}
We will first assume that $\mu$ has bidegree $(n,*)$ so that
$\mu=\hat\mu\w dz$, where $\hat\mu$ has bidegree $(0,*)$, and show that $\hat\mu$ is pseudomeromorphic. We may assume that
$\mu=\pi_*(\tau\w ds)$, where $\pi:\U\to X$ is a mapping as in the
definition of pseudomeromorphicity, $s$ are local coordinates
in $\U\subset \C^m$, and $\tau$ is
elementary. Since $\pi$ has generically surjective differential, 
we can write $s=(s',s'')=(s'_1,\ldots, s'_n, s''_{n+1},\ldots, s''_m)$
so that $h:=\det(\partial \pi/\partial s')=\det(\partial z/\partial
s')$ is generically nonvanishing in $\U$. 
By Proposition~\ref{1ekorre} and Remark~\ref{los} there is a
pseudomeromorphic 
$\tau'$ with compact support in $\U$ such that $h\tau'=\tau$ in $\U$. 
Now  
$$
\hat \mu \w dz=\pi_*(\tau\w ds)=\pi_*(\tau'\w hds'\w ds'')=
\pi_*(\tau'\w\pi^* dz\w ds'')=\pm \pi_*(\tau'\w ds'')\w dz.
$$
Thus $\hat\mu=\pm \pi_*(\tau'\w ds'')$ is pseudomeromorphic.  
In general, $\mu_I\w dz=\pm \mu\w dz_{I^c}$, where $I^c$ is the complementary multiindex
of $I$.  It follows from above that $\mu_I$ is pseudomeromorphic. 
Thus (i) follows.

The first statement of (ii) follows immediately from (i), and the second one follows
since $L_\xi \mu=\partial(\xi\neg\mu)+\xi\neg(\partial\mu)$.
\end{proof}

\subsection{The sheaves $\PM_X^Z$ and $\W_X^Z$}
Let $X$ be a reduced analytic space, let $Z\subset X$ be a (reduced) subspace
of pure dimension, and denote by 
$\PM_X^Z$ the subsheaf of $\PM_X$ of currents that have support on $Z$.
We say that $\mu\in\PM_X^Z$  has the {\it standard extension property, SEP, on  
$Z$} if $\1_W\mu=0$ in $\U$ for each  
subvariety $W\subset \U\cap Z$ of positive codimension, where $\U$ is any open set in $X$.
Let $\W_X^Z$ be the subsheaf of $\PM_X^Z$ of currents with the SEP on
$Z$.  
In case $Z=X$ we usually write $\W_X$ rather than $\W_X^X$.

\begin{ex}\label{elex}
Note that an elementary current in $\U$ with elementary
support $H$ is in $\W_\U^H$. 
\end{ex}

It is easy to see that Theorem ~\ref{storknar2} holds for $\PM_X^Z$ as well, since neither $\partial$
nor contraction can increase support.  Somewhat less obvious is that also the SEP is preserved.

\begin{thm}\label{storknar3}  
The sheaf $\W_X^Z$ is invariant under $\partial$, and 
the statements in Theorem ~\ref{storknar2} hold 
for $\W_X^Z$ instead of $\PM$. 
\end{thm}
 
This theorem is a consequence of the following general equalities.

\begin{prop}
Assume that $\mu$ is a \pmm current on $X$. 
If $V\subset X$ is any analytic subset, then
\begin{equation}\label{puff2}
\1_V\partial \mu=\partial \1_V\mu.
\end{equation}
If $\xi$ is a holomorphic vector field, then
\begin{equation}\label{puff1}
\1_V\xi\neg\mu=\xi\neg\1_V\mu.
\end{equation}
\end{prop}

\begin{proof}
Note that \eqref{puff1} follows in view of \eqref{restrikdef2}. Let us
therefore 
 focus on  \eqref{puff2}.
By \eqref{skolgard} it is enough to consider $V=Z(h)$, where $h$ is a nontrivial
holomorphic function.  
Take $\chi\sim \chi_{[1,\infty)}$ and let $\chi_\epsilon=\chi(|h|^2/\epsilon)$.   Now
\begin{equation}\label{kross}
\chi_\epsilon\partial \mu=\partial(\chi_\epsilon\mu)-\partial\chi_\epsilon\w \mu.
\end{equation}
If the last term  tends to $0$ when $\epsilon\to 0$, after taking limits we get that $\1_{h\neq 0}\partial\mu=
\partial(\1_{h\neq 0}\mu)$, which is equivalent to \eqref{puff2}.   
Let $\hat\chi(t)= t\chi'(t)+ \chi(t) $, and notice that also $\hat\chi\sim  \chi_{[1,\infty)}$.
According to Proposition ~\ref{1ekorre} there is a \pmm $\mu'$ such that $\mu=h\mu'$. 
The last term in \eqref{kross} is therefore
$$
\chi'(|h|^2/\epsilon)\bar h\partial h\w\mu/\epsilon=
\chi'(|h|^2/\epsilon)|h|^2\partial h\w\mu'/\epsilon=
\hat\chi(|h|^2/\epsilon) \partial h\w \mu'- \chi_\epsilon \partial h\w\mu',
$$
which tends to $\1_{h\neq 0}\partial h\w \mu'-\1_{h\neq 0}\partial h\w \mu'=0$.
\end{proof}


\section{Almost semi-meromorphic currents}\label{asmsec}
We say that a current on $X$ is \emph{semi-meromorphic} if it is of the form 
$\omega [1/f]$, where $f$ is a generically nonvanishing holomorphic section of a line
bundle $L\to X$ and  $\omega$ is a smooth form with values in $L$. 
For simplicity we will often omit the brackets $[\  ]$ indicating principal value in the sequel.
Since furthermore $\omega [1/f]=[1/f]\omega$ when $\omega$ is smooth we can write just
$\omega/f$.
   
\subsection{The algebra $ASM(X)$}
Let $X$ be a pure-dimensional reduced analytic space.  We say that a current $a$ is {\it almost
semi-meromorphic} in $X$, $a\in ASM(X)$,  if there is a modification $\pi\colon X'\to X$ such that
\begin{equation}\label{asm}
a=\pi_*(\omega/f),
\end{equation}
where $\omega/f$ is semi-meromorphic in $X'$.
We say that
$a$ is {\it almost smooth} in $X$ if one can choose $f$ to be
nonvanishing.  
We can assume that $X'$ is smooth because otherwise we take a smooth modification
$\pi'\colon X''\to X'$ and consider the pullbacks of $f$ and $\omega$ to $X''$,
cf., \eqref{polly}.   If nothing else is said we tacitly assume that $X'$ is smooth.

Notice that if $\U\subset X$ is an open subset, then the restriction $a_\U$ of $a\in ASM(X)$ to
$\U$ is in $ASM(\U)$. In fact,  if \eqref{asm} holds, then 
$\U':=\pi^{-1}\U\to \U$ is a modification of $\U$,  and $a_\U$ is the direct image of 
the restriction of $\omega/f$ to $\U'$. 
 
If $V$ has positive codimension in $\U\subset X$,
then $\pi^{-1} V$ has positive codimension in $\U'$ and
$\1_V a=\pi_*(\1_{\pi^{-1}V} (\omega/f))=\pi_*(\omega\1_{\pi^{-1}V}
(1/f))=0$ in $\U$, cf., 
\eqref{brutus2}, \eqref{brutus1}, and the dimension principle.  Thus $ASM(X)$ is
contained in $\W(X)$.


\begin{remark} One can introduce a notion "locally almost semi-meromorphic
current" and consider the associated sheaf. However, for the moment we have no need for such a 
concept. 
\end{remark}

\begin{ex}
Assume that $X=\{zw=0\}\subset\C^2$. Let $a\colon X\to\C$  
be $1$ and $0$ on the $z$-axis and the $w$-axis, respectively, except at the origin.
Then $a$ is almost smooth. Indeed the normalization
$\nu\colon\widetilde X\to X$ consists of two disjoint components and
$a=\nu_*\tilde a$, where $\tilde a$ is $0$ and $1$, respectively, on these components.
\end{ex}

Given a modification $\pi \colon X'\to X$, let $\sing(\pi)\subset X'$ be the (analytic) set 
where $\pi$ is not a
biholomorphism.  By the definition of a 
modification it has positive codimension. Let $a$ be given by
\eqref{asm} and let $Z\subset X'$   be the zero set of $f$. By assumption 
also $Z$ has positive codimension.  Notice that $a\in ASM (X)$ is smooth outside
$\pi(Z\cup \sing(\pi))$ which has positive codimension in $X$. 
We let $ZSS(a)$, the {\it Zariski-singular support} of $a$, be the smallest 
Zariski-closed set $V\subset X$ such that $a$ is smooth outside $V$.

\begin{ex}
Assume that $a\in ASM(X)$ is almost smooth. Then $a=\pi_*\omega$,
where $\omega$ is smooth, and thus $ZSS(a)\subset \pi(\sing(\pi))$.
This inclusion may be strict.  For example if $a$ is smooth, then 
$ZSS(a)$ is empty. In this case $\omega=\pi^* a$ outside $\sing(\pi)$ and since both sides are smooth across $\sing(\pi)$, by continuity, then $\omega=\pi^* a$ everywhere in $X'$.
\end{ex}

Given two modifications $X_1\to X$ and $X_2\to X$, there is a modification $\pi\colon X'\to X$ that
factorizes over both $X_1$ and $X_2$, i.e., we have $X'\to X_j\to X$ for $j=1,2$. Therefore,
given $a_1,a_2\in ASM(X)$ we can assume that $a_j=\pi_*(\omega_j/f_j)$, $j=1,2$.
It follows that 
$$
a_1+a_2=\pi_*\Big(\frac{\omega_1}{f_1}+\frac{\omega_2}{f_2}\Big)=
\pi_*\frac{f_2\omega_1+f_1\omega_2}{f_1f_2},
$$
so that $a_1+a_2$ is in $ASM(X)$ as well. Moreover, 
$A:=\pi_*(\omega_1\w\omega_2/f_1 f_2)$
is an almost semi-meromorphic current that coincides with $a_1\w a_2$ outside the set
$\pi\big (\sing(\pi)\cup  V(f_1)\cup V(f_2)\big)$.  If we had 
 other representations $a_j=\pi'_*(\omega'_j/f_j')$, $j=1,2$, we would get an almost semi-meromorphic $A'$ that coincides generically with
$a_1\w a_2$ on $X$. Since almost semi-meromorphic have the SEP, thus $A=A'$. Hence we can define 
$a_1\w a_2$ as  $A$. 
Similarly, since 
$$
a_2\w a_1= (-1)^{\deg a_1 \deg a_2} a_1\w a_2,\quad a_1\w(a_2+a_3)=a_1\w a_2+a_1\w a_3
$$
and  
$$
a_1\w (a_2\w a_3)=(a_1\w a_2)\w a_3
$$
hold generically on $X$ and because of the SEP they hold on $X$. 
Thus $ASM(X)$ is an algebra.

 \begin{remark} Notice that the almost smooth currents form a subalgebra of $ASM(X)$.
\end{remark}

\begin{ex} 
Clearly $ZSS(a_1\w a_2)\subset ZSS(a_1)\cup ZSS(a_2)$ but the inclusion
may be strict.  Take for instance $z_1/z_2$ and $z_2/z_3$.   
\end{ex}

\begin{ex}\label{meromorphic} 
The most basic example of an (almost semi-)meromorphic current is the
principal value current associated with a meromorphic form. 
Let $f$ a be meromorphic $k$-form on $X$, i.e., locally $f=g/h$ where $h$ is a holomorphic 
function that is generically nonvanishing and 
$g$ is a holomorphic $(k,0)$-form. 
By definition 
$g/h=g'/h'$ if and only if $g' h-gh'$ vanishes outside a set of positive codimension. 
In that case 
\begin{equation}\label{3bob}
g\pfrac{h}=g'\pfrac{h'}
\end{equation}
outside a set of positive codimension. By the dimension principle
therefore \eqref{3bob} holds everywhere. Thus there is a well-defined 
almost semi-meromorphic current  $[f]$ associated with $f$. 
Notice that $ZSS([f])$ is contained
in the pole set of the meromorphic form $f$, so unless $X$ is smooth it may have
codimension larger than $1$.
Actually,  $ZSS([f])$ is equal to the pole set of $f$.  In fact, by continuity $\dbar f=0$ where
$f$ is smooth, and by a classical result proved by Malgrange (at
least for functions), \cite{M}, then
$f$ is holomorphic there.
 \end{ex}


The following lemma will be crucial in what follows.

\begin{lma}\label{asm0}
If $a$ is almost semi-meromorphic in $X$, then there is a representation 
\eqref{asm} such that $f$ is nonvanishing in $X'\setminus
\pi^{-1}ZSS(a)$. 
\end{lma}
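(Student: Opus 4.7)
The plan is to begin with any representation $a=\pi_*(\omega/f)$ on a smooth modification $\pi\colon X'\to X$, pass to a further blow-up to put $Z(f)$ in simple normal crossings form, and then cancel the irreducible components of $Z(f)$ whose images in $X$ lie outside $ZSS(a)$. Since composing $\pi$ with any smooth modification $p\colon\tilde X\to X'$ yields another representation $a=(\pi\circ p)_*(p^*\omega/p^*f)$, by principalization we may assume $Z(f)=\sum_j m_j D_j$ is a simple normal crossings divisor on $X'$. Call $D_j$ \emph{good} if $D_j\not\subset\pi^{-1}ZSS(a)$ and \emph{bad} otherwise; split $Z(f)=D_g+D_b$ accordingly and factor $L=L_g\otimes L_b$, $f=f_g\otimes f_b$, where $f_g,f_b$ are canonical holomorphic sections with zero divisors $D_g$ and $D_b$.

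The crucial intermediate step is that $\omega/f$ coincides with the smooth form $\pi^*a$ on the open set $X'\setminus\pi^{-1}ZSS(a)$. Both currents are sections of $\W$ over that open set: $\omega/f$ is semi-meromorphic, hence in $ASM(X')\subset\W(X')$, and $\pi^*a$ is smooth. They agree on the dense open subset $X'\setminus\bigl(\sing(\pi)\cup Z(f)\cup\pi^{-1}ZSS(a)\bigr)$, where $\pi$ is a biholomorphism and everything is smooth; their difference is thus a current with the standard extension property on $X'\setminus\pi^{-1}ZSS(a)$, supported on a proper analytic subset, and hence zero. Consequently $\omega/f_g=f_b\cdot\pi^*a$ is smooth on $X'\setminus\pi^{-1}ZSS(a)$, i.e., $\omega$ vanishes to order at least $m_j$ along $D_j\setminus\pi^{-1}ZSS(a)$ for every good $D_j$. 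Since $D_j$ is irreducible and good, this subset is dense in $D_j$, so continuity of the smooth derivatives of $\omega$ propagates the vanishing to all of $D_j$.

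By a standard local argument in simple normal crossings coordinates, these global vanishing orders yield $\omega=f_g\cdot\omega'$ with $\omega'$ a smooth $L_b$-valued form on $X'$, so that $a=\pi_*(\omega'/f_b)$ is the desired representation: $f_b$ is a holomorphic section of $L_b$, and $Z(f_b)=|D_b|\subset\pi^{-1}ZSS(a)$. The main obstacle is the smoothness comparison $\omega/f=\pi^*a$ on $X'\setminus\pi^{-1}ZSS(a)$: a direct pointwise argument fails at points of $Z(f)\cap\sing(\pi)$, where $\pi$ is not a local biholomorphism, and one must invoke the standard extension property to equate two a priori distinct currents that agree only on a dense open set.
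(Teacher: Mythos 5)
Your overall strategy is the same as the paper's: start from an arbitrary representation $a=\pi_*(\omega/f)$, establish the identity $\omega=f\,\pi^*a$ on $X'\setminus\pi^{-1}ZSS(a)$, and then divide $\omega$ by the part of $f$ whose zero divisor is not contained in $\pi^{-1}ZSS(a)$. (The paper does this one irreducible component of $Z(f)$ at a time, using that components are Cartier on the smooth $X'$, rather than via a further principalizing blow-up; and it gets the identity $\omega=f\pi^*a$ by continuity of both sides rather than by your SEP argument — both of these variations are harmless.) The problem is the last step.

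The division is the heart of the lemma, and your treatment of it has a genuine gap. You reduce to the statement that $\omega$ ``vanishes to order at least $m_j$ along $D_j$'' and then invoke ``a standard local argument in simple normal crossings coordinates'' to conclude $\omega=f_g\cdot\omega'$ with $\omega'$ smooth. But $\omega$ is only a \emph{smooth} form, not a holomorphic one, and for smooth functions the vanishing of all derivatives of order $<m$ along $\{t_1=0\}$ does \emph{not} imply smooth divisibility by $t_1^{m}$: the function $\bar t_1^{\,m}$ has all such derivatives vanishing on $\{t_1=0\}$, yet $\bar t_1^{\,m}/t_1^{\,m}$ is not even continuous there. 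The information you actually have is stronger — $\omega/f_g$ is already smooth on the dense open subset $D_j\setminus\pi^{-1}ZSS(a)$ of $D_j$ — and the real content of the lemma is that this smoothness propagates across the thin set $D_j\cap\pi^{-1}ZSS(a)$. This is not a standard SNC computation; it is a Malgrange-type division statement. The paper handles it by verifying, via the dimension principle, that $\frac{\partial^{\alpha}\omega}{\partial\bar t^{\alpha}}\,\dbar\frac{1}{f'}=0$ for all multiindices $\alpha$ on all of $X'$ (not just off $\pi^{-1}ZSS(a)$), and then applying Theorem~1.2 of \cite{Aglatt}, which characterizes membership of a smooth function in the ideal generated by a holomorphic function precisely in terms of the vanishing of these residue currents. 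Your proof needs this (or an equivalent division theorem) to close; as written, the final step is asserted rather than proved, and the naive ``vanishing order'' version of it is false.
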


\begin{proof}
Let $V=ZSS(a)$ and 
assume that we have a representation \eqref{asm} and that $X'$ is smooth. Let $Z$ be the 
union of the irreducible components of the divisor defined by $f$ that are not fully contained in  
$\pi^{-1}V$.  
 Since $X'$ is smooth,  $Z$ is a Cartier divisor and thus the divisor of a section $f'$ of some line bundle $L'\to X'$.  It follows that $g:=f/f'$ is a holomorphic section of $L\otimes (L')^{-1}$
in $X'$ that is nonvanishing in $X'\setminus \pi^{-1}V$.
Outside  $\sing(\pi)\cup Z\cup \pi^{-1} V$ we have that 
\begin{equation}\label{asm2}
\omega=f\pi^*a=f'g \pi^*a.
\end{equation}
By continuity,  \eqref{asm2} must hold in $X'\setminus \pi^{-1} V$
since  both sides are smooth there.  

We claim that $\widetilde \omega := \omega/f'$ is smooth in $X'$.  
Taking this for granted, then 
\begin{equation}\label{asm5}
\pi_*\frac{\widetilde\omega}{g}
\end{equation}
is in $ASM(X)$ and the zero set of $g$ is contained in $\pi^{-1} V$. Since \eqref{asm5} coincides
with $a$  outside $V\cup \pi(\sing(\pi))$  
it follows by the SEP that \eqref{asm5} indeed is equal to $a$ in $X$. Thus the lemma follows.

The claim is a local statement in $X'$ so given a point in $X'$ we can choose local coordinates $t$ 
in a \nbh $\U$ of that point and consider
each coefficient of the form $\omega$ with respect to these coordinates. Thus we may assume
that $\omega$ is a function and that  $\omega=f'\gamma$ where $\gamma=g\pi^*a$ is smooth in
$\U\setminus \pi^{-1} V$, cf., \eqref{asm2}  and the comment thereafter.
For all multiindices $\alpha$  thus 
\begin{equation}\label{asm3}
\frac{\partial^{\alpha}\omega}{\partial\bar t^{\alpha}}\dbar\frac{1}{f'}=0
\end{equation}
in $\U\setminus\pi^{-1} V$, since $f'\dbar (1/f')=0$.  By assumption $Z\cap \pi^{-1} V$ has positive codimension in
$Z$. By the dimension principle it follows that \eqref{asm3} holds in
$\U$ for all $\alpha$, since $\dbar(1/f')$ has support on $Z$. 
From \cite[Theorem~1.2]{Aglatt}  we conclude that $\widetilde \omega$ is smooth in $\U$.  It follows that
$\widetilde \omega$ is smooth in $X'$.   
\end{proof}

\subsection{Action of $ASM(X)$ on $\PM_X$}

We will now extend Proposition ~\ref{groda1}  to general almost semi-meromorphic
currents.

\begin{thm}\label{hittills}
Assume that  $a\in ASM(X)$. For each $\mu\in\PM(X)$ 
there is a unique \pmm  current $T$ in $X$ that coincides with 
$a\w\mu$ in $X\setminus ZSS(a)$ and such that 
$\1_{ZSS(a)} T=0$.
\end{thm}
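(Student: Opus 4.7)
The plan is to establish uniqueness first, then construct $T$ explicitly via Lemma~\ref{asm0}, Proposition~\ref{kraka} and Proposition~\ref{groda1}, and finally force $\1_{ZSS(a)}T=0$ by applying the restriction operator $\1_{X\setminus ZSS(a)}$ to the natural candidate.

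Uniqueness is immediate. If $T_1,T_2\in\PM(X)$ both agree with $a\w\tau$ on $X\setminus V$, where $V:=ZSS(a)$, and satisfy $\1_V T_i=0$, then $T_1-T_2$ is a pseudomeromorphic current supported on $V$, so $T_1-T_2=\1_V(T_1-T_2)=0$.

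For existence, I would invoke Lemma~\ref{asm0} to choose a representation $a=\pi_*(\omega/f)$ with $\pi\colon X'\to X$ a smooth modification, $\omega$ smooth, and $f$ a section of a line bundle that is nonvanishing on $X'\setminus\pi^{-1}V$. By Proposition~\ref{kraka}, lift $\tau$ to some $\tau'\in\PM(X')$ with $\pi_*\tau'=\tau$, and by Proposition~\ref{groda1} form $[1/f]\tau'\in\PM(X')$. Then define
$$
T:=\1_{X\setminus V}\,\pi_*\bigl(\omega\w [1/f]\tau'\bigr).
$$
Clearly $T\in\PM(X)$, and using~\eqref{skolgard},
$$
\1_V T=\1_V\1_{X\setminus V}\pi_*\bigl(\omega\w [1/f]\tau'\bigr)=\1_{V\cap (X\setminus V)}\pi_*\bigl(\omega\w [1/f]\tau'\bigr)=0,
$$
so the second required property holds for free.

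It remains to verify that $T$ agrees with $a\w\tau$ on $X\setminus V$, which reduces to showing that $\pi_*(\omega\w[1/f]\tau')$ itself does so. On $X'\setminus\pi^{-1}V$, Lemma~\ref{asm0} gives $f\neq 0$, hence $[1/f]\tau'=(1/f)\tau'$ there by Proposition~\ref{groda1}; moreover $\omega/f$ is smooth on $X'\setminus\pi^{-1}V$ and coincides with $\pi^*a$ on the open subset where $\pi$ is a biholomorphism, hence, by continuity of both sides, throughout $X'\setminus\pi^{-1}V$. The standard projection formula $\pi_*(\pi^*a\w\tau')=a\w\pi_*\tau'$ for smooth forms then yields $T|_{X\setminus V}=a\w\tau|_{X\setminus V}$. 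Independence of $T$ from the choice of lift $\tau'$ and representation of $a$ follows from uniqueness. I do not anticipate any serious obstacle; the only subtle point is justifying the smooth extension $\omega/f=\pi^*a$ across the $\pi$-exceptional locus inside $X'\setminus\pi^{-1}V$, which is handled by continuity of smooth forms.
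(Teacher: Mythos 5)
Your proof is correct and follows essentially the same route as the paper's: Lemma~\ref{asm0} to arrange $Z(f)\subset\pi^{-1}ZSS(a)$ with $\omega/f=\pi^*a$ off $\pi^{-1}ZSS(a)$, Proposition~\ref{kraka} to lift $\tau$, and Proposition~\ref{groda1} to form $[1/f]\tau'$; by \eqref{brutus2} and Lemma~\ref{3apsko} your current $\1_{X\setminus V}\,\pi_*(\omega\w[1/f]\tau')$ is precisely the regularized limit \eqref{asm4} that the paper takes as its candidate. The only difference is presentational: the paper organizes the argument around showing that the limit in \eqref{asm4} exists as a pseudomeromorphic current (thereby recording the regularization formula as a by-product), whereas you apply the restriction operator directly and verify the two defining properties.
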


Let $V=ZSS(a)$. 
If such an extension $T$ exists then 
$T=\1_{X\setminus V}T=\1_{X\setminus V}a\w\mu$ and so $T$ is
unique. Moreover, if $h$ is a holomorphic tuple such that  $Z(h)=V$, then 
\begin{equation}\label{asm4}
T=\lim_{\epsilon\to 0}\chi(|h|^2v/\epsilon) a\w\mu 
\end{equation}
 in view of Lemma ~\ref{3apsko}.
We will denote the extension $T$ by  $a\w\mu$ as well.

\begin{proof} 
As observed above, if the extension $T$ exists, then \eqref{asm4}
holds. 
Conversely, 
if the limit in \eqref{asm4} exists as a \pmm  current $T$ on $X$, then 
it must coincide with $a\w\mu$ in $X\setminus V$. In particular, 
$\chi(|h|^2v/\epsilon) T=\chi(|h|^2v/\epsilon) a\w\mu$ for each
$\epsilon>0$ and hence, taking limits and using Lemma ~\ref{3apsko},
we get 
$\1_{X\setminus V} T=T$, i.e., $\1_{ZSS(a)} T=0$.
To prove the theorem it is thus enough to verify that the limit in \eqref{asm4} exists
as a \pmm current.

In view of  Lemma ~\ref{asm0} we may assume that $a$ has the form
\eqref{asm}, where $Z=Z(f)$ is contained in $\pi^{-1} V$ and  
$\omega/f=\pi^* a$ in $X'\setminus \pi^{-1} V$.  
Let $\chi_\epsilon=\chi(|h|^2v/\epsilon)$,
so that  $\pi^*\chi_\epsilon=\chi(|\pi^*h|\pi^*v/\epsilon)$. 
By Proposition~\ref{kraka} there is  $\mu'\in\PM(X')$ such that $\pi_*\mu'=\mu$. Thus
$$
\chi_\epsilon a\w\mu=\chi_\epsilon a\w\pi_*\mu'=
\pi_*\big(\pi^*\chi_\epsilon\pi^*a\w\mu'\big)=
\pi_*\big(\pi^*\chi_\epsilon \frac{\omega}{f}\w\mu'\big).
$$
In view of Proposition ~\ref{groda1} and Lemma~\ref{3apsko}, 
$$
\pi^*\chi_\epsilon \frac{\omega}{f}\w\mu'\to \1_{X'\setminus \pi^{-1}V}\frac{\omega}{f}\w\mu'
$$
when $\epsilon\to 0$.
In particular, the limit 
is a \pmm  current. Thus the limit in \eqref{asm4} exists and is pseudomeromorphic.  
\end{proof}

Notice that the definition of $a\wedge \mu$ is local, so that it commutes
with restrictions to open subsets of $X$. 
Thus for each $a\in ASM(X)$ we get a linear sheaf mapping
\begin{equation}\label{rav}
\PM_X \to \PM_X,  \quad
\mu\mapsto  a\wedge \mu.
\end{equation}

\begin{prop} Assume that $a\in ASM(X)$.
If $W$ is an analytic subset of $\U\subset X$ and  $\mu\in\PM(\U)$, then
\begin{equation}\label{pelargonia}
\1_W(a\w\mu)=a\w \1_W\mu.
\end{equation}
\end{prop}

\begin{proof}
On the one hand \eqref{pelargonia} holds in the open set $\U\setminus ZSS(a)$ by
\eqref{brutus1} since $a$ is
smooth there. On the other hand both sides vanish on $ZSS(a)$, so
\eqref{pelargonia} holds in all of $\U$; indeed $\1_{ZSS(a)} (a\w\1_W\mu)=0$ by definition, cf.,
Theorem ~\ref{hittills}, and 
$\1_{ZSS(a)}\1_W (a\w\mu)=\1_W \1_{ZSS(a)}(a\w\mu)=0$
in view of \eqref{skolgard}. 
%
%
\end{proof}

\begin{prop} \label{koko}
Each $a\in ASM(X)$ induces a linear mapping
\begin{equation}\label{rav2}
\W_X^Z \to \W_X^Z, \quad
\mu\mapsto  a\wedge \mu.
\end{equation}
\end{prop}

\begin{proof} To begin with, certainly $a\w\mu$ has support on $Z$ if $\mu$ has.  Let $\U$ be an open subset of $X$ and assume that
$W\subset \U\cap Z$ has positive codimension in $\U\cap Z$. Then
$\1_W(a\w\mu)=a\w \1_W\mu=0$ if $\1_W\mu=0$, cf., \eqref{pelargonia}. 
\end{proof}

\begin{ex} Assume that $\mu$ is in $\W_X$. Then  
$\mu':=[1/h]\mu$ is in $\W$ as well and if $h$ is generically nonvanishing, then 
$h\mu'=h[1/h]\mu=\1_{\{h\neq 0\}}\mu=\mu$, cf., Remark ~\ref{linne}.
\end{ex}

\begin{prop}
Assume that $a_1, a_2\in ASM (X)$ and $\mu\in\PM_X$. Then 
\begin{equation}\label{strosa}
a_1\w a_2\w \mu=(-1)^{\deg a_1 \deg a_2} a_2\w a_1 \w \mu. 
\end{equation}
\end{prop}

\begin{proof}
Notice that both sides of \eqref{strosa} coincide outside
$ZSS(a_1)\cup ZSS(a_2)$ and the restictions to  $ZSS(a_1)\cup
ZSS(a_2)$ vanish. 
\end{proof} 

In particular, one of the $a_j$ may be a smooth form.  We conclude that both
\eqref{rav} and \eqref{rav2} are $\E$-linear.

\begin{prop} If $a_1,a_2\in ASM(X)$ and $\mu\in \W_X$, then 
\begin{equation}\label{proms}
a_1\w a_2\w \mu=(a_1\w a_2)\w \mu, \quad (a_1+a_2)\w\mu=a_1\w\mu +
a_2\w\mu.  
\end{equation}
\end{prop}

In fact, \eqref{proms} holds outside $V:=ZSS(a_1)\cup ZSS(a_2)$ and
since $\1_V \mu=0$ the equalities follow from \eqref{pelargonia}.

\begin{ex}
Both equalities in \eqref{proms} may fail for a general $\mu\in\PM_X$. 
Let $a_1=1/z_1$, $a_2=z_1/z_2$, $a_3=1/z_2$, and 
$\mu=\dbar(1/z_1)$. Then $(a_1a_2)\mu=(1/z_2)\dbar(1/z_1)$, but
$a_2\mu=0$, and so $a_1a_2\mu=0$. 
Moreover 
\[
(a_1+a_3)\mu=\frac{z_2+z_1}{z_1z_2}\dbar\frac{1}{z_1}=0
\]
but 
\[
a_1\mu+a_3\mu=\frac{1}{z_1}\dbar\frac{1}{z_1}+\frac{1}{z_2}\dbar\frac{1}{z_1}=\frac{1}{z_2}\dbar\frac{1}{z_1}.
\]
\end{ex}

\subsection{Vector-valued almost semi-meromorphic currents}\label{lisa}
We will need to consider almost semi-meromorphic currents that take values in a holomorphic
vector bundle $E\to X$.  We say that $a\in ASM(X,E)$ if there is a representation \eqref{asm},
where as before $f$ is a holomorphic section
of $L\to X'$ and now $\omega$ takes values in $L\otimes \pi^* E$.  Clearly then $a$ is a current with values in $E$. If $\eta$ is a test form with values in
the dual bundle $E^*$, then $a.\eta=\pi_*( (\omega/f).\pi^*\eta )$. Let $e_j$ be a local frame for $E$
in $\U$ and let $\xi$ be a test function with support in $\U$. If $\xi'=\pi^*\xi$,
$e_j'=e_j\circ\pi$ and $\omega=\omega_1 e_1'+\omega_2 e'_2+\cdots$, then
\begin{equation}\label{fiol}
\xi a=\sum_j \pi_*(\xi'\omega_j/f) e_j.
\end{equation}

\begin{prop} \label{oden}
Assume that $X$ is smooth. There are natural isomorphisms 
\begin{equation}\label{flinga}
ASM^{p,*}(X,E)\simeq ASM^{0,*}(X,\Lambda^pT^*_{1,0}(X)\otimes E).
\end{equation}
\end{prop}

\begin{proof}
First notice that if $F,G$ are vector bundles of the same rank over $X'$ and $h$ is a holomorphic section of $\Hom(F,G)$
that is generically invertible, then there is a holomorphic section $g$ of
$\Hom(G,F)\otimes \det G \otimes (\det F)^{-1}$ such that 
$hg=s\cdot I_G$, where $s$  is a generically nonvanishing section of $\det G \otimes (\det F)^{-1}$.  

For simplicity we assume that $E$ is a trivial line bundle; the general case is proved in the same way.
Now, let $F=\pi^*\Lambda^pT^*_{1,0}(X)$ and $G=\Lambda^pT^*_{1,0}(X')$. Then
we have a natural mapping $h\colon F\to G$ as above, defined by
just mapping the
frame element $dz_I$ to its pullback $\pi^*dz_I$.  Clearly $h$ is an isomorphism where
$\pi\colon X'\to X$ is biholomorphic.  
 
Now, if $a\in ASM^{0,*}(X,\Lambda^pT^*_{1,0}(X))$, then we have the representation
$a=\pi_*(\omega/f)$,  where $\omega$ takes values in $F\otimes L$.  Then 
$h\omega$ is a $(p,*)$-form in $X'$ with values in $L$. It follows that 
$a':=\pi_*(h\omega/f)$ is an element in $ASM^{p,*}(X)$.  We claim that $a'=a$. By the
SEP it is enough to verify the identity where $\pi$ is a biholomorphism. 
Let $z$ be coordinates in an open subset $\U\subset X\setminus \pi(\sing\pi)$, and let
$\xi$ be a test function with support in $\U$. Then, cf., \eqref{fiol}, 
\begin{multline*}
\xi a=\sum'_{|I|=p}\pi_*(\xi'\omega_I/ f)\w dz_I=
\pi_*(\xi'\sum'_{|I|=p}\omega_I/ f\w \pi^*dz_I)=\pi_*(\xi' h\omega/f)=\\
\xi\pi_*(h\omega/f)
=\xi a'.
\end{multline*}

Conversely, since $h^{-1}=g/s$, if  $a'\in ASM^{p,*}(X)$, then
 $a'=\pi_*(\tilde\omega/f)$, where $\tilde\omega$ is
a $(p,*)$-form with values in $L$, then $g\tilde\omega$ takes values in $F\otimes
\det G \otimes (\det F)^{-1}\otimes L$ and $sf$ takes values in 
$\det G \otimes (\det F)^{-1}\otimes L$, so that $a=\pi_*(g\tilde\omega/sf)$ is an element
in $ASM^{0,*}(X,\Lambda^pT^*_{0,1}(X))$. Again one verifies that they coincide
in $X\setminus \pi(\sing\pi)$.
\end{proof}

Notice that if $p=1$, then $s$ is a section of the relative canonical bundle
$K_{X'/X}=K_{X'}\otimes \pi^* K_X^{-1}.$

\subsection{Residues of almost semi-meromorphic currents}\label{lisa2}
We shall now study the effect of $\partial$ and $\dbar$ on almost semi-meromorphic currents.

\begin{prop}\label{skrot} 
If $a\in ASM(X)$, then $\partial a\in ASM(X)$ and $b:=\1_{X\setminus
  ZSS (a)} \dbar a\in ASM(X)$. 
\end{prop}
 
Thus we have the decomposition 
\begin{equation}\label{trots}
\dbar a=b+r,
\end{equation}
where $r:=\1_{ZSS (a)} \dbar a$ has support on $ZSS(a)$.

\begin{proof}
Assume that $a=\pi_*(\omega/f)$ and let $D=D'+\dbar$ 
be a Chern connection on $L\to X'$. Then
$$
\partial a=\pi_*(\partial \frac{\omega}{f})=\pi_*\frac{f\cdot D'\omega
  - D'f\w\omega}{f^2}, 
$$
which is in $ASM(X)$. 

In view of Lemma ~\ref{asm0} we may assume that $Z(f)\subset \pi^{-1}V$, where
$V=ZSS(a)$. Now 
\begin{equation}\label{sprut}
\dbar a=\pi_*\frac{\dbar\omega}{f}+\pi_*\dbar\frac{1}{f}\w\omega.
\end{equation}
By \eqref{brutus2}, 
\begin{equation}
\1_{X\setminus V}~\dbar
a=\pi_*\left(\1_{\pi^{-1}(X\setminus V)}\frac{\dbar\omega}{f}\right)
+
\pi_*\left(\1_{\pi^{-1}(X\setminus V)} ~\dbar\frac{1}{f}\w\omega\right)
=
\pi_*\left(\frac{\dbar\omega}{f}\right); 
\end{equation}
thus $\1_{X\setminus V}~\dbar a \in ASM (X)$. 
For the last equality we have used Proposition ~\ref{groda1} and the
fact that $\dbar(1/f)$ has support on $\pi^{-1}V$. 
\end{proof}

In the same way we have: {\it If $a\in ASM(X,E)$  then \eqref{trots} holds,
where   $b=\1_{X\setminus ZSS (a)} \dbar a$ is in $ASM(X,E)$ and
$r=\1_{ZSS (a)} \dbar a$ is a pseudomeromorphic current with support on $ZSS(a)$ that takes
values in $E$.}

\smallskip

 Clearly the decomposition \eqref{trots} is unique. We call  $r=r(a)$ the
\emph{residue (current)} of $a$.  Notice that if 
$a$ is almost smooth, then $r(a)=0$.

\begin{remark} 
If $a=\pi_*(\omega/f)$ is any representation of $a$, then still \eqref{sprut} holds, and
since the first term is in $ASM(X)$ we conclude that
$$
r(a)=\pi_*\left ( \dbar \frac{1}{f} \w \omega \right ).
$$
\end{remark}

Notice that the current $\dbar (1/f)$ is the residue of the
principal value current $1/f$.  
Similarly, the residue currents introduced, e.g., in \cite{PTY, A1, AW1} can be
considered as residues of certain almost semi-meromorphic currents,
generalizing $1/f$.

\begin{ex}\label{asmex}
Let us describe the construction of the residue currents in \cite{A1}. 
Let $f$ be a holomorphic section of a Hermitian vector bundle $E\to X$, and let $\sigma$ be the section over $X\setminus Z(f)$ 
of the dual bundle $E^*$ with minimal norm such that $f\sigma=1$.  
We can find a 
modification $\pi\colon X'\to X$ that is a biholomorphism $X'\setminus\pi^{-1}Z(f)\simeq X\setminus Z(f)$
such that $\pi^*f=f^0 f'$, where $f^0$ is a holomorphic section of a line bundle $L\to X'$,
$\divi f^0$ is contained in  $\pi^{-1}Z(f)$, and
$f'$ is a nonvanishing section of $\pi^*E\otimes L^{-1}$. Then 
$$
\pi^*\sigma=\sigma'/f^0,
$$
where
$\sigma'$ is a smooth section of $\pi^*E^*\otimes L$.  
Thus 
$$
\pi^*\big(\sigma\w(\dbar\sigma)^{k-1}\big)=\frac{\sigma'\w(\dbar\sigma')^{k-1}}{(f^0)^k}
$$
is a section of $\Lambda^k(\pi^*E\oplus T^*_{0,1}(X'))$ in
$X'\setminus\pi^{-1}Z(f)$; for the reader's convenience note that
$\dbar\sigma$ has even degree in $\Lambda^k(\pi^*E\oplus T^*_{0,1}(X'))$.  It follows that
$$
U_k:=\sigma\w(\dbar\sigma)^{k-1}
$$
has an extension to an almost semi-meromorphic section of $\Lambda^k(E\oplus T^*_{0,1}(X))$,
as the push-forward of $\sigma'\w(\dbar\sigma')^{k-1}/(f^0)^{k}$.  Clearly
$ZSS(U_k)\subset Z(f)$. 
Now the residue current $R$ in \cite{A1} is the residue of the almost
semi-meromorphic current  $U=\sum_k U_k$.  More precisely, if $\delta_f$ denotes
interior multiplication by $f$, then 
$(\delta_f-\dbar) U=1-R$, i.e., $\dbar U=R+\delta_fU-1$, where $R$ is the residue and
$\delta_fU-1$ is almost semi-meromorphic.
If $E$ is trivial with trivial
metric, the coefficients of $R$ are the Bochner-Martinelli residue
currents introduced in \cite{PTY}. 
\end{ex}

\smallskip 
Clearly Theorem ~\ref{hittills} extends to vector-valued currents.
As a consequence of this theorem we can define products 
of residues of almost semi-meromorphic currents and pseudomeromorphic
currents:

\begin{df}\label{bounce}
For $a\in ASM (X,E)$ and $\mu\in \PM_X$ we define 
\begin{equation}\label{konc}
\dbar a\w\mu:=\dbar(a\w\mu)-(-1)^{\deg a} a\w\dbar\mu,
\end{equation} 
where $a\w\mu$ and $a\w\dbar\mu$ are defined as in
Theorem ~\ref{hittills}. 
Moreover we define
\begin{equation*}
r(a)\wedge \mu:= \1_{ZSS(a)}\dbar a\wedge \mu. 
\end{equation*}
\end{df}
Thus $\dbar a\w\mu$ is defined so that  the Leibniz rule holds. 
It is easily checked that
\begin{equation}\label{dagis} 
r(a)\wedge\mu=\lim_{\epsilon\to 0}\dbar \chi(|h|^2v/\epsilon)a\w\mu, 
\end{equation} 
if $Z(h)=ZSS(a)$. 
%
In particular this gives a way of defining products of $\dbar$ and residues of almost
semi-meromorphic currents. For example, the Coleff-Herrera product
$\dbar(1/f_1)\w\cdots\w\dbar(1/f_p)$ can be defined by inductively
applying \eqref{konc}. 
In \cite{Astrong} the first author defined 
products of more general residue currents in this way.

Notice that in general $a_1\w\dbar a_2$ is {\it not} equal to $\pm \dbar a_2\w a_1$, cf., Remark ~\ref{alligator}, and neither is 
\begin{equation}\label{pravda}
\dbar a_1\w\dbar a_2=\pm \dbar a_2\w \dbar a_1
\end{equation} 
in general; take, e.g., $a_1=1/z$ and $a_2=1/zw$. 

\begin{thm}\label{newthm}
Assume that $a_1,\ldots, a_p$ are almost semi-meromorphic currents of
degree $(*,k_1-1),\ldots, (*,k_p-1)$, respectively, and that
\begin{equation}\label{villkor}
\codim \Big (ZSS(a_{i_1})\cap\cdots\cap ZSS(a_{i_r}) \Big )\geq
k_{i_1}+\cdots + k_{i_r}
\end{equation}
for all    $\{i_1,\ldots, i_r\}\subset\{1,\ldots, p\}$. 
Then 
\begin{multline}\label{jojo}
\dbar a_1\w\cdots\w \dbar a_j\w \dbar a_{j+1}\w \cdots\w \dbar a_p=\\
(-1)^{(\deg a_j+1)(\deg a_{j+1}+1)}\dbar a_1\w\cdots\w \dbar a_{j+1}\w \dbar a_{j}\w \cdots\w \dbar a_p.
\end{multline}
\end{thm}
\begin{remark}\label{roso}
In fact, one can modify the proof below so that one can replace any
factor $\dbar
a_i$ in \eqref{jojo} by $a_i$. More precisely, 
let $b_i$ be either $a_i$ or $\dbar a_i$ for $i=1,\ldots, p$. Then 
\begin{equation}
b_1\w\cdots\w b_j\w b_{j+1}\w\cdots\w b_p=
(-1)^{\deg b_j \cdot \deg b_{j+1}} b_1\w\cdots\w b_{j+1}\w
b_{j}\w\cdots\w b_p. 
\end{equation}
\end{remark}

\begin{remark}
If the almost semimeromorphic parts of $\dbar a_i$ vanish, then it is
enough to assume 
\begin{equation}\label{secret}
\codim \Big (ZSS(a_{1})\cap\cdots\cap ZSS(a_{p}) \Big )\geq
k_{1}+\cdots + k_{p}. 
\end{equation} 
Indeed, note that in this case the currents in \eqref{jojo} have
support on $V:=ZSS(a_{1})\cap\cdots\cap ZSS(a_{p})$. Thus it is enough
to prove \eqref{jojo} in a neighborhood of $x\in V$, and there
\eqref{secret} implies \eqref{villkor}. 

In particular, the  Coleff-Herrera product 
$\dbar(1/f_1)\w\cdots\dbar(1/f_p)$
is (anti-)commutative in its factors if 
the codimension of $\{f_1=\ldots =f_p=0\}$ is at least $p$. 


\end{remark}

\begin{proof}
Let $V_j=ZSS(a_j)$. Moreover, let $b_i$ be either an almost
semi-meromorphic current or
$\dbar$ of an semi-meromorphic current for $i=1,\ldots, r$, cf., Remark \ref{roso}, and assume
that $\alpha$ is smooth. Then note that 
\begin{equation}\label{brev}
b_1\w\cdots\w b_\ell\w \alpha \w b_{\ell+1}\w\cdots\w b_r=
(-1)^{\deg\alpha (\deg b_1+\ldots +\deg b_\ell)} \alpha\w
b_1\w\cdots\w b_r. 
\end{equation} 
Assume that 
\begin{multline}\label{hoho}
\dbar a_1\w\cdots\w \dbar a_{j-1}\w a_j\w \dbar a_{j+1}\w \cdots\w \dbar a_p=\\
(-1)^{\deg a_j(\deg a_{j+1}+1)}\dbar a_1\w\cdots\w \dbar a_{j-1}
\w \dbar a_{j+1}\w a_{j}\w \dbar a_{j+2}\w \cdots\w \dbar a_p.
\end{multline}
Applying $\dbar$ to \eqref{hoho} yields \eqref{jojo} in
view of \eqref{konc}. 

To prove \eqref{hoho} we will proceed by induction. First assume that
$p=2$. Then in view of \eqref{brev}, 
\begin{equation}\label{berta}
a_1\w\dbar a_2=(-1)^{\deg a_1(\deg a_2+1)}\dbar a_2 \wedge a_1,
\end{equation}
where $a_1$ or $a_2$ is smooth, i.e., outside $V_1\cap V_2$. 
Because of the assumption
\eqref{villkor}, \eqref{berta} holds in all of $X$ by the dimension
principle. 
Next, assume that \eqref{hoho} holds for
$p=\ell$. In view of \eqref{brev}, \eqref{hoho} holds for $p=\ell+1$, where
$a_j$ or $a_{j+1}$ is smooth. 
Moreover, by \eqref{brev} and the assumption that 
\eqref{hoho} holds for $p=\ell$, \eqref{hoho} holds for $p=\ell+1$,
where (at least) one of $a_1,\ldots, a_{j-1}, a_{j+2}, \ldots,
a_{\ell+1}$ is smooth. Thus \eqref{hoho} holds for $p=\ell+1$ outside
$V_1\cap\cdots\cap V_{\ell+1}$, and thus by \eqref{villkor} and the dimension principle it
holds in all of $X$. Hence \eqref{hoho} and thus
\eqref{jojo} hold for all $p$. 
\end{proof}

The following example shows that $r(a)=0$ does not imply that
$r(a)\wedge \mu=0$. This points out the importance of keeping in mind
that $\mu\mapsto r(a)\w\mu$ is an operator on $\PM_X$ rather than a
"product".

\begin{ex}\label{cello}
Let us consider the setting in Example ~\ref{asmex}. Assume in addition
that $Z(f)$ has codimension at least $2$. Note that then $r(\sigma)=0$ by the dimension principle, since it has
bidegree $(0,1)$ and support on $Z(f)$, which has codimension $\geq
2$. 
However, if $\tau$ is the almost semi-meromorphic part of $\dbar U$,
then  
$r(\sigma)\wedge \tau$ is the residue current $R$ from
\cite{A1} which is nonzero,  cf.,  Example ~\ref{asmex}. 
\end{ex}

\begin{remark}
There are other (weighted) approaches to 
products of residue currents, see, e.g., \cite{P, W},
which 
coincide with the products above under suitable
conditions. 
\end{remark}

\subsection{Action of holomorphic differential operators and vector fields}
\label{actsec}

Finally we prove that $ASM(X)$ is preserved under the action of
holomorphic vector fields.

\begin{thm}\label{storknar} Let $\xi$ be a holomorphic vector field on a smooth manifold
$X$. 
If $a\in ASM(X)$, then the contraction $\xi\neg a$ and the Lie derivative
$L_\xi a$, a~priori defined on
$X\setminus  ZSS(a)$, have extensions as elements in $ASM(X)$.
\end{thm}

Since the extensions, if they exist, must be unique, we can simply say that
$\xi\neg a$ and $L_\xi a$ are in $ASM(X)$.

\begin{proof}
Let $\pi\colon X'\to X$ be a modification so that $a$ has the form
\eqref{asm}.  Then $\xi':=\pi^*\xi$ is a global section of $\pi^*T(X)$, that
is the natural lifting of $\xi$ to $T(X')$ over $X'\setminus\sing(\pi)$. 
By duality the mapping $\pi^*T^*_{1,0}(X)\to T^*_{1,0}(X')$  from the proof of Proposition ~\ref{oden}  induces a holomorphic mapping
$T(X')\to\pi^*T(X)$ that is the identity outside $\sing(\pi)$. If $h$
denotes this dual map, by the first part of the same proof there is a holomorphic mapping
$g\colon \pi^*T(X)\to T(X')\otimes K_{X'/X}$ such that $hg=sI_{\pi^*T(X)}$,
where $s$ is a holomorphic section of  $K_{X'/X}$.
Thus $g\xi'/s$ is a semi-meromorphic vector field on $X'$ that
coincides with $\xi'$ on $X'\setminus\sing(\pi)$. Moreover, $b:=s\xi'$ is smooth.
Outside
$\pi(\sing(\pi))\cup ZSS(a)$ we now have that
$$
\xi\neg a=\pi_*\big(\frac{\xi'\neg \omega}{f}\big)=\pi_*\big(\frac{b\neg \omega}{s f}\big)
$$
and it is clear that the right hand side defines an almost semi-meromorphic current
in $X$.
Finally, 
$L_\xi a=\xi\neg(\partial a)+\partial (\xi \neg a)$
is in $ASM(X)$ in view of Proposition~\ref{skrot}.
\end{proof}

By similar arguments one can prove that $\mathcal L a$ is in $ASM(X)$ if
$a$ is an almost semi-meromorphic $(0,q)$-current and $\mathcal L$ is any (global) holomorphic
differential operator. 
More precisely, one can show that $\mathcal L a=\pi_* (s^{-N} \mathcal L' (\omega/f))$ for some $N$, where $s$ is the section of
$K_{X'/X}$ in the proof above and $\mathcal L'$ is a
holomorphic differential operator (with values in $K_{X'/X}^N$).

\begin{cor} Let $X$ be an open subset of $\C^n_z$. 
If 
\begin{equation}\label{aform}
a=\sum_{|I|=p}' a_I\w dz_I
\end{equation}
is in $ASM(X)$, then  each  $a_I$ is in $ASM(X)$.
If $a\in ASM(X)$ has bidegree $(0,*)$, then 
$\partial a/\partial z_j$ is in  $ASM(X)$ for each  $j$.
\end{cor}

\end{document}